\DeclareSymbolFont{cyrletters}{OT2}{wncyr}{m}{n}
\DeclareMathSymbol{\Sha}{\mathalpha}{cyrletters}{"58}
\newcommand{\define}[1]{\textbf{\textit{#1}}}
\newcommand{\ZZ}{\mathbf Z}
\newcommand{\QQ}{\mathbf Q}
\newcommand{\NN}{\mathbf N}
\renewcommand{\hat}{\widehat}
\renewcommand{\Im}{\operatorname{Im}}
\newcommand{\GL}{\operatorname{GL}}
\newcommand{\SL}{\operatorname{SL}}
\newcommand{\PSL}{\operatorname{PSL}}
\newcommand{\Aut}{\operatorname{Aut}}
\newcommand{\Gal}{\operatorname{Gal}}
\newcommand{\OO}{\mathcal{O}}
\newtheorem{thm}{Theorem}[section]
\newtheorem{lem}[thm]{Lemma}
\newtheorem{prop}[thm]{Proposition}
\newtheorem{cor}[thm]{Corollary}
\theoremstyle{remark}
\newtheorem{rmk}[thm]{\bf Remark}
\numberwithin{equation}{section}
\newtheorem{dfn}[thm]{\bf Definition}
\begin{document}
\title{Elliptic Curves with maximally disjoint division fields}

\begin{abstract}
One of the many interesting algebraic objects associated to a given rational elliptic curve, $E$, is its full-torsion representation $\rho_E:\Gal(\bar{\QQ}/\QQ)\to\GL_2(\hat{\ZZ})$. Generalizing this idea, one can create another full-torsion Galois representation, $\rho_{(E_1,E_2)}:\Gal(\bar{\QQ}/\QQ)\to\left(\GL_2(\hat{\ZZ})\right)^2$ associated to a pair $(E_1,E_2)$ of rational elliptic curves. The goal of this paper is to provide an infinite number of concrete examples of pairs of elliptic curves whose associated full-torsion Galois representation $\rho_{(E_1,E_2)}$ has maximal image. The size of the image is inversely related to the size of the intersection of various division fields defined by $E_1$ and $E_2$. The representation $\rho_{(E_1,E_2)}$ has maximal image when these division fields are maximally disjoint, and most of the paper is devoted to studying these intersections.
\end{abstract}

\author{Harris B. Daniels}
\address[Harris B. Daniels]{Department of Mathematics
Amherst College Box 2239
P.O. 5000
Amherst, MA 01002-5000}
\email{hdaniels@amherst.edu}

\author{Jeffrey Hatley}
\address[Jeffrey Hatley]{Department of Mathematics
Bailey Hall 202
Union College
Schenectady, NY 12308}
\email{hatleyj@union.edu}

\author{James Ricci}
\address[James Ricci]{Department of Mathematics and Computer Science
Daemen College 
Duns Scotus 339 
4380 Main Street 
Amherst, NY 14226}
\email{jricci@daemen.edu}

\date{\today}

\subjclass{Primary 14H52; Secondary 11F80}
\keywords{Elliptic Curves, Galois Representations}

\maketitle
%\tableofcontents

\section{Introduction}
Let $E$ be an elliptic curve defined over $\QQ$, let $\bar{\QQ}$ be a fixed algebraic closure of $\QQ$, and for each positive integer $n$ let
\[
E[n] = \left\{P\in E(\bar{\QQ}) : [n]P =\OO\right\}
\]
denote the $n$-torsion of $E$. It is a classical result that $E[n]$ is non-canonically isomorphic to $\ZZ/n\ZZ\times\ZZ/n\ZZ$ and the group $G_\QQ = \Gal(\bar{\QQ}/\QQ)$ acts on $E[n]$ component-wise. Therefore, we can construct a Galois representation associated to the $n$-torsion of $E$,
\[
\bar\rho_{E,n}: G_\QQ\to\Aut(E[n])\simeq\GL_2(\ZZ/n\ZZ).
\]
By choosing compatible bases and taking an inverse limit ordered by divisibility, we can construct the full-torsion representation associated to $E$,
\[
\rho_E:G_\QQ\to \GL_2(\hat{\ZZ}) \simeq \prod_p \GL_2(\ZZ_p),
\]
where the product is taken over all prime numbers.

A natural question is, how large can the image of $\rho_E$ be inside of $\GL_2(\hat{\ZZ})$? More specifically, can $\rho_E$ be surjective? With these questions in mind, we give the following definition:

\begin{dfn}
An integer $n\geq 2$ is said to be \emph{exceptional} for $E$ if $\bar{\rho}_{E,n}$ is not surjective. 
\end{dfn}

We can translate questions about the size of $\Im\rho_E$ into a question about which numbers are exceptional for $E$ and, for an exceptional $n$, how drastically $\rho_{E,n}$ fails to be surjective. It is a standard result that when $E$ is an elliptic curve with complex multiplication (CM), \emph{every} integer except for possibly 2 is exceptional for $E$. See \cite[Theorem 2.3]{ATAEC} for more detail. On the other hand, if $E$ is a rational elliptic curve that does not have CM, Serre showed in \cite{Serre72} that the index $[\GL_2(\hat{\ZZ}):\Im\rho_E]$ is finite. One implication of this is that for each elliptic curve there are only finitely many exceptional primes. Additionally, Serre proved the following theorem.

\begin{prop}\label{prop-index2}\cite[Proposition 22]{Serre72}
For any elliptic curve $E$ defined over $\QQ$, the image of $\rho_E:G_\QQ \to \GL_2(\hat{\ZZ})$ is contained in a group of index $2$ inside $\GL_2(\hat{\ZZ})$.
\end{prop}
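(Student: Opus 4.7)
The plan is to produce a non-trivial quadratic character $\chi: \GL_2(\hat\ZZ) \to \{\pm 1\}$ whose composition with $\rho_E$ is trivial; the kernel of such a $\chi$ is then an index-$2$ subgroup of $\GL_2(\hat\ZZ)$ containing $\image(\rho_E)$, which is exactly what the proposition asserts. The character $\chi$ will be built as a product of two characters of $\GL_2(\hat\ZZ)$ that, after composition with $\rho_E$, independently cut out the same quadratic extension of $\QQ$, namely $\QQ(\sqrt{\Delta_E})$.

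First I would show that $\QQ(\sqrt{\Delta_E}) \subseteq \QQ(E[2])$. The representation $\bar\rho_{E,2}$ describes the Galois action on the three nontrivial $2$-torsion points, equivalently on the roots of the $2$-division polynomial $f(x)$, whose discriminant is a rational square times $\Delta_E$. Under the standard isomorphism $\GL_2(\FF_2) \cong S_3$, pulling back the sign character $\mathrm{sgn}: \GL_2(\FF_2) \to \{\pm 1\}$ along $\bar\rho_{E,2}$ therefore yields the quadratic character $\varepsilon_E: G_\QQ \to \{\pm 1\}$ cutting out $\QQ(\sqrt{\Delta_E})$. On the other hand, by Kronecker-Weber, $\QQ(\sqrt{\Delta_E}) \subseteq \QQ(\zeta_m)$ for some $m$, so $\varepsilon_E = \psi \circ \chi_{\mathrm{cyc}}$ for a unique Dirichlet character $\psi$ modulo $m$. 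Since the Weil pairing forces $\det \circ \rho_E = \chi_{\mathrm{cyc}}$, I obtain
\[
\mathrm{sgn} \circ \bar\rho_{E,2} \;=\; \varepsilon_E \;=\; \psi \circ \det \circ \rho_E.
\]
Defining $\chi: \GL_2(\hat\ZZ) \to \{\pm 1\}$ by $\chi(g) = \mathrm{sgn}(g \bmod 2) \cdot \psi(\det g)$ therefore gives $\chi \circ \rho_E \equiv 1$.

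It remains to verify that $\chi$ is non-trivial on $\GL_2(\hat\ZZ)$: if $\psi$ is the trivial character then $\chi$ reduces to the sign character of $\GL_2(\FF_2)$, which is visibly non-trivial, while if $\psi$ is non-trivial one can choose $g$ whose mod-$2$ reduction is trivial but whose determinant falls outside $\ker\psi$. The main obstacle I expect is the careful bookkeeping in identifying $\varepsilon_E$ in two different ways—first as the pullback of the $S_3$ sign character through $\bar\rho_{E,2}$, and second as the Dirichlet character $\psi$ arising via Kronecker-Weber—and confirming that the sign conventions (in particular, tracking square factors in $\disc(f)$ versus $\Delta_E$) actually line up, so that the resulting $\chi$ really does factor through $\{\pm 1\}$ and kills $\image(\rho_E)$.
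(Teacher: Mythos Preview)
The paper does not supply its own proof of this proposition; it simply cites Serre \cite[Proposition~22]{Serre72}. Your proposal is correct and is essentially Serre's original argument: the quadratic extension $\QQ(\sqrt{\Delta_E})$ is cut out once by the sign character of $\GL_2(\FF_2)\cong S_3$ acting on $E[2]$, and a second time (via Kronecker--Weber and the Weil pairing) by a Dirichlet character composed with $\det$, and the product of these two characters of $\GL_2(\hat\ZZ)$ is non-trivial yet kills $\Im\rho_E$. Your verification that $\chi$ is non-trivial in both the $\psi$ trivial and $\psi$ non-trivial cases is fine; for the latter, note that the principal congruence subgroup mod~$2$ in $\GL_2(\ZZ_2)$ still surjects onto $\ZZ_2^\times$ via $\det$ (e.g.\ via diagonal matrices $\mathrm{diag}(1,u)$), so there is no obstruction even when $\psi$ has even conductor.
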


This theorem implies that $\rho_E$ can never be surjective, and thus there exists at least one exceptional number $n$ (not necessarily prime). In the same paper, Serre gave two examples of elliptic curves whose image has index exactly 2 inside $\GL_2(\hat\ZZ)$, showing that this lower bound on the index of $\Im \rho_E$ is sharp. 

Following Lang and Trotter we give the following definition:

\begin{dfn}\label{dfn-serre_curve}
An elliptic curve $E/\QQ$ is called a \define{Serre curve} if $[\GL_2(\hat{\ZZ}):\Im\rho_E]=2$. 
\end{dfn}

Furthermore, there is no reason to restrict our attention to Galois representations associated to only \emph{one} elliptic curve. Given a pair of rational elliptic curves $(E_1,E_2)$ and a positive integer $n$, we can consider the action of $G_\QQ$ on $E_1[n]\times E_2[n]$ to get a new Galois representation 
\[
\bar\rho_{(E_1,E_2),n}: G_\QQ\to\ \left(\GL_2(\ZZ/n\ZZ)\right)^2,
\]
given by $\bar\rho_{(E_1,E_2),n}(\sigma) = (\bar\rho_{E_1,n}(\sigma),\bar\rho_{E_2,n}(\sigma))$. Just as before we can construct the full-torsion representation associated to the pair $(E_1,E_2)$
\[
\rho_{(E_1,E_2)}:G_\QQ\to \left( \GL_2(\hat{\ZZ}) \right)^2,
\]
and it is again natural to ask, how big can the image of $\rho_{(E_1,E_2)}$ be? 

There is a natural limitation on the size of the image of $\rho_{(E_1,E_2)}$ in $\GL_2(\hat\ZZ)$ coming from the Weil pairing. Given an elliptic curve $E/\QQ$, let $\QQ(E[n])$ be the field of definition of the $n$-torsion points of $E$. One consequence of the Weil pairing is that if $\zeta_n$ is a primitive $n$-th root of unity, then $\QQ(\zeta_n)\subset\QQ(E[n])$. Therefore, it must be that $\QQ(\zeta_n) \subset \QQ(E_1[n])\cap\QQ(E_2[n])$.

The action of an element in the Galois group on an $n$-th root of unity can be related to its image under $\bar\rho_{E,n}$ through the determinant. That is, given an elliptic curve $E/\QQ$, $\sigma\in G_\QQ$, and an $n$-th root of unity $\zeta_n$, it must \emph{always} be that 
\begin{equation}\label{eq-cyclotomic}
\sigma(\zeta_n) = \zeta_n^{\det(\bar\rho_{E,n}(\sigma))}.	
\end{equation}

Therefore, for each positive integer $n$, we define 
\[
D_n := \left\{(A,B)\in\Big(\GL_2(\ZZ/n\ZZ)\Big)^2: \det A =\det B\right\}
\]
and 
\[
D := \left\{(A,B)\in \left( \GL_2(\hat{\ZZ}) \right)^2: \det A = \det B  \right\}.
\]

With these definitions and the observations above we can see that for any pair of rational elliptic curves $(E_1,E_2)$ and positive integer $n$, the image of $\bar\rho_{(E_1,E_2),n}$ and $\rho_{(E_1,E_2)}$ must be contained inside of $D_n$ and $D$ respectively. Therefore, any result associated with the size of $\Im\rho_{(E_1,E_2)}$ should be formulated in terms of $[D:\Im\rho_{(E_1,E_2)}]$.

For any two elliptic curves $E_1$ and $E_2$ defined over $\QQ$, we have
\[
\Im\rho_{(E_1,E_2)} \subset (\Im\rho_{E_1}\times \Im\rho_{E_2})\cap D.
\]
Since the right-hand side has index at least 4 inside of $D$ (by Proposition \ref{prop-index2}), we give the following definition in the spirit of Definition \ref{dfn-serre_curve}:
\begin{dfn}
A pair $(E_1,E_2)$ is called a \define{Serre pair} if $[D: \Im\rho_{(E_1,E_2)}] = 4.$	
\end{dfn}

In \cite{Jones2013}, Jones shows that, in some appropriate sense, almost all pairs of elliptic curves are Serre pairs. The proof uses a multi-dimensional large sieve but provides no concrete examples of Serre pairs. In fact, there are no examples of Serre pairs in the current literature. The main goal of this paper is to rectify this deficiency by providing infinitely many examples of Serre pairs. The first step toward this goal is to find an infinite family of Serre curves since clearly any Serre pair must be a pair of Serre curves.

\begin{lem}\label{lem-ex_serre_curves}\cite[Example 8.2]{Daniels-SC}
Let $\ell$ be an odd prime with $\ell\neq 7$. Then the elliptic curve $$E_\ell: y^2 + xy = x^3 + \ell$$
is a Serre curve. 
\end{lem}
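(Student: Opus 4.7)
The plan is to verify the standard characterization of Serre curves: $E/\QQ$ is a Serre curve if and only if (a) the mod-$p$ Galois representation $\bar\rho_{E,p}$ is surjective onto $\GL_2(\FF_p)$ for every prime $p$, and (b) the only entanglement among the division fields of $E$ is the one forced by the inclusion $\QQ(\sqrt{\Delta^*})\subset \QQ(E[2])$, where $\Delta^*$ is the squarefree part of $\disc(E)$.

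First I would compute $\disc(E_\ell)$ directly from the Weierstrass coefficients $a_1=1$, $a_2=a_3=a_4=0$, $a_6=\ell$, obtaining $\Delta(E_\ell) = -\ell(1+432\ell)$. The exclusion $\ell\neq 7$ becomes transparent here: $1+432\cdot 7 = 3025 = 55^2$, so $\QQ(\sqrt{\Delta(E_7)}) = \QQ(\sqrt{-7})$, which is the unique quadratic subfield of $\QQ(\zeta_7)\subset \QQ(E_7[7])$. This forces an additional entanglement between $\QQ(E_7[2])$ and $\QQ(E_7[7])$ beyond the one dictated by the Weil pairing, so $E_7$ cannot be a Serre curve. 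For $\ell\neq 7$, I would show that the squarefree part of $-\ell(1+432\ell)$ introduces an odd prime factor outside the cyclotomic conductor of any division field redundantly forced by the Weil pairing, so that condition (b) reduces to the single forced inclusion $\QQ(\sqrt{\Delta^*})\subset \QQ(E_\ell[2])$.

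Next I would verify surjectivity mod $p$ for every prime $p$. For $p=2$, a direct calculation gives the $2$-division polynomial $4x^3 + x^2 + 4\ell$; the rational root theorem (applied to the short list of candidate roots dividing $4\ell$), together with a non-square discriminant check, shows the mod-$2$ image is the full symmetric group $S_3 \cong \GL_2(\FF_2)$. For $p=3$, I would rule out Borel and Cartan containments by computing traces of Frobenius at small primes of good reduction whose characteristic polynomials are irreducible modulo $3$. For $p\geq 5$, Serre's classification forces any non-surjective mod-$p$ image into a Borel, a normalizer of a (split or non-split) Cartan, or one of finitely many exceptional subgroups; each case is excluded by exhibiting Frobenius witnesses at auxiliary primes $q$ of good reduction, chosen uniformly in $\ell$.

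The main obstacle is the uniform handling of condition (a) at primes $p\geq 5$: because the family $(E_\ell)$ varies with $\ell$, one must select Frobenius data that simultaneously excludes every proper maximal subgroup of $\GL_2(\FF_p)$ for all $\ell$, and separately dispatch the finite list of small $p$ where exceptional subgroups exist. Once both (a) and (b) are confirmed, the characterization recalled above yields $[\GL_2(\hat\ZZ):\Im\rho_{E_\ell}] = 2$, so $E_\ell$ is a Serre curve for every odd prime $\ell\neq 7$.
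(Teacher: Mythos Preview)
The paper does not give its own proof of this lemma; it is simply quoted as \cite[Example~8.2]{Daniels-SC} and used as a black box.  So there is no argument in the present paper to compare against.

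Your outline has the right shape and aligns with the strategy of the cited reference: compute $\Delta(E_\ell)=-\ell(1+432\ell)$, explain the exclusion $\ell=7$ via $1+432\cdot 7=55^2$ (so $\QQ(\sqrt{\Delta(E_7)})=\QQ(\sqrt{-7})\subset\QQ(\zeta_7)$ forces an extra entanglement between the $2$- and $7$-division fields), and then verify a Jones-type criterion for Serre curves.  The discriminant arithmetic and the $\ell=7$ analysis are correct.

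That said, what you have written is a plan rather than a proof, and you correctly identify the real gap: establishing surjectivity of $\bar\rho_{E_\ell,p}$ \emph{uniformly} in both $p$ and $\ell$.  Your suggestion to ``exhibit Frobenius witnesses at auxiliary primes $q$ of good reduction, chosen uniformly in $\ell$'' does not work as stated, since $a_q(E_\ell)$ depends on $\ell$; at best one reduces to finitely many residue classes of $\ell\bmod q$, and one must still organize this into a finite verification covering all $p\ge 5$.  In \cite{Daniels-SC} this step is handled by invoking structural results---Mazur's isogeny theorem to exclude Borel images for large $p$, classification of rational points on the relevant modular curves to exclude Cartan-normalizer and exceptional images, and explicit checks at the finitely many remaining small primes---rather than by a single Frobenius-sampling argument.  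None of that is reproduced in the paper under review, so your sketch, while pointed in the right direction, would need substantial additional input to become a proof.
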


Using this lemma we will be able to construct the first examples of Serre pairs coming from the main theorem of this paper: 

\begin{thm}\label{thm-main}
Let $\ell_1$ and $\ell_2$ be odd primes not equal to $7$ such that $\gcd(432\ell_1^2+\ell_1, 432\ell_2^2+\ell_2) = 1$, and for $i=1,2$ let 
\[ 
E_{\ell_i}: y^2+xy = x^3+\ell_i.
\]
Then the pair $(E_{\ell_1},E_{\ell_2})$ is a Serre pair.
\end{thm}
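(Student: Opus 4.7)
The plan is to translate the main theorem into a statement about intersections of division fields and to exploit the coprime-discriminant hypothesis to show that this intersection is as small as the Weil pairing forces it to be. By Lemma~\ref{lem-ex_serre_curves}, both $E_{\ell_1}$ and $E_{\ell_2}$ are Serre curves, so each $\Im\rho_{E_{\ell_i}}$ has index exactly $2$ in $\GL_2(\hat{\ZZ})$; consequently $(\Im\rho_{E_{\ell_1}}\times\Im\rho_{E_{\ell_2}})\cap D$ already has index $4$ in $D$. Hence the theorem is equivalent to the equality
\[
\Im\rho_{(E_{\ell_1},E_{\ell_2})} \;=\; (\Im\rho_{E_{\ell_1}}\times\Im\rho_{E_{\ell_2}})\cap D,
\]
and it is this maximality that must be established.

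Via the fundamental theorem of Galois theory applied to the compositum $\QQ(E_{\ell_1}[\infty])\cdot\QQ(E_{\ell_2}[\infty])$, together with the standard Goursat-type observation that a subgroup of a direct product with surjective projections is the fiber product over a common quotient, the above equality is equivalent to
\[
\QQ(E_{\ell_1}[\infty])\cap\QQ(E_{\ell_2}[\infty]) \;=\; \bigcup_{N\geq 1}\QQ(\zeta_N),
\]
the containment $\supseteq$ always holding by the Weil pairing. A direct Weierstrass computation gives $\Delta_{E_\ell}=-\ell(432\ell+1)$, so the hypothesis $\gcd(432\ell_1^2+\ell_1,\,432\ell_2^2+\ell_2)=1$ is exactly the coprimality $\gcd(|\Delta_{E_{\ell_1}}|,|\Delta_{E_{\ell_2}}|)=1$; in particular the two curves have pairwise disjoint sets of primes of bad reduction, and both have good reduction at $2$ and $3$.

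With this reformulation in hand, I would split the intersection into $p$-primary pieces via the identification $\GL_2(\hat{\ZZ})=\prod_p\GL_2(\ZZ_p)$. For each prime $p\geq 5$, the group $\SL_2(\ZZ_p)$ is perfect, so every abelian quotient of $\Im\rho_{E_{\ell_i},p^\infty}=\GL_2(\ZZ_p)$ factors through $\det$; combined with the N\'eron--Ogg--Shafarevich criterion and the disjointness of bad primes, this rules out nontrivial $p$-adic entanglement between the two curves. The delicate part is the $2$-adic piece, since each Serre character is a nontrivial index-$2$ character of $\GL_2(\hat{\ZZ})$ that cuts out the quadratic field $\QQ(\sqrt{\Delta_{E_{\ell_i}}})\subseteq\QQ(E_{\ell_i}[2])$. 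Because $\ell_i$ is prime and coprime to $432\ell_i+1$, the integer $\ell_i(432\ell_i+1)$ is not a perfect square, so each $\QQ(\sqrt{\Delta_{E_{\ell_i}}})$ is a genuine quadratic field; the coprimality of discriminants then forces these two quadratic fields to be distinct, hence linearly disjoint over $\QQ$.

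The main obstacle is packaging these local statements into a single global conclusion about the compositum of division towers: one must check that no ``hidden'' entanglement remains between $\QQ(E_{\ell_1}[\infty])$ and $\QQ(E_{\ell_2}[\infty])$ once the cyclotomic part and the Serre-character data have been accounted for. This is precisely the study of intersections of division fields advertised in the introduction, and I expect the authors' earlier results to provide a structural decomposition of each $\QQ(E_{\ell_i}[\infty])$ suitable for a direct computation of the intersection. With such a decomposition and the coprimality of discriminants plugged into the Goursat fiber product, both sides of the containment $\Im\rho_{(E_{\ell_1},E_{\ell_2})}\subseteq(\Im\rho_{E_{\ell_1}}\times\Im\rho_{E_{\ell_2}})\cap D$ acquire the same index in $D$, and the theorem follows.
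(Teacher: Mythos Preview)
Your overall strategy---translate to intersections of division fields via Goursat, then exploit the coprime-discriminant hypothesis through ramification---is exactly the paper's, which makes this concrete via Lemma~\ref{lem-2_conditions} by reducing to two finite-level checks: $\Im\bar\rho_{(E_1,E_2),p}=D_p$ for every prime $p\geq 5$, and $\Im\bar\rho_{(E_1,E_2),36}=D_{36}$.

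There is, however, a genuine gap in your $p\geq 5$ argument. Perfectness of $\SL_2(\ZZ_p)$ tells you only that \emph{abelian} quotients of $\GL_2(\ZZ_p)$ factor through $\det$; it does not explain why the common Goursat quotient should be abelian in the first place. N\'eron--Ogg--Shafarevich together with disjoint bad primes shows that $K_1\cap K_2$ is unramified outside $p$, but non-abelian extensions of $\QQ$ unramified outside $p$ certainly exist, so this alone cannot force the intersection down to $\QQ(\zeta_p)$. The paper instead works at level $p$, lists the four normal subgroups $\{I\},\{\pm I\},\PSL_2(\ZZ/p\ZZ),\SL_2(\ZZ/p\ZZ)$ of $\SL_2(\ZZ/p\ZZ)$, and rules out the three proper ones. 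The decisive extra input you are missing is a Tate-curve computation (Lemma~\ref{lem-ramification}) giving $e_{\ell_1}\geq p$ in $\QQ(E_1[p])/\QQ$; since $\ell_1$ is unramified in $K_2$, this forces $[K_1:K_1\cap K_2]\geq p$ and eliminates $N=\{I\}$ and $N=\{\pm I\}$. The remaining possibility $N=\PSL_2(\ZZ/p\ZZ)$ is excluded by a subfield-lattice argument from Adelmann, not by perfectness.

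Your small-prime analysis is likewise incomplete, as you acknowledge. Distinguishing $\QQ(\sqrt{\Delta_1})$ from $\QQ(\sqrt{\Delta_2})$ handles one quadratic entanglement inside the $2$-division fields, but it neither controls the full intersection $K_{1,4}\cap K_{2,4}$ nor addresses $p=3$ at all. The paper carries out a case-by-case analysis at levels $4$ and $9$ using Adelmann's explicit subfield lattices, verifying separately that $K_{1,4}\cap K_{2,4}=\QQ(\zeta_4)$, that $K_{1,9}\cap K_{2,9}=\QQ(\zeta_9)$, and that $K_{i,4}\cap K_{j,9}=\QQ$ for $i\neq j$; the latter two require comparing cubic radical extensions such as $\QQ(\zeta_3,\sqrt[3]{\Delta_i})$, not merely the quadratic $\QQ(\sqrt{\Delta_i})$.
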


In fact, we obtain the following slightly stronger result.

\begin{cor}\label{corollary-main}
Let $\ell_1$ be an odd prime different from $7$. Then there exist infinitely many primes $\ell_2$ such that the pair $(E_{\ell_1},E_{\ell_2})$ is a Serre pair.
\end{cor}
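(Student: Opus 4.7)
The plan is to reduce the corollary directly to Dirichlet's theorem via Theorem \ref{thm-main}. Given a fixed odd prime $\ell_1\neq 7$, the task is to produce infinitely many odd primes $\ell_2\neq 7$ satisfying $\gcd(432\ell_1^2+\ell_1,\,432\ell_2^2+\ell_2)=1$. Writing $N := 432\ell_1^2+\ell_1 = \ell_1(432\ell_1+1)$, my strategy is to locate such $\ell_2$ inside a single arithmetic progression whose modulus captures every prime obstruction.

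First I would observe that $432 = 2^4\cdot 3^3$, so $432\ell_2+1$ is automatically coprime to $6$ for every $\ell_2$. Consequently, only prime divisors $p$ of $N$ with $p>3$ can threaten the coprimality condition, and for each such $p$ the requirement $p\nmid \ell_2(432\ell_2+1)$ amounts to $\ell_2$ avoiding the two residues $0$ and $-432^{-1}$ modulo $p$. Since $p\geq 5$, this excludes at most two classes out of $p$, leaving at least $p-2\geq 3$ admissible residues modulo $p$, all of them nonzero.

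Next, letting $p_1,\ldots,p_k$ denote the prime divisors of $N$ that exceed $3$, I would use the Chinese Remainder Theorem to choose a residue $a$ modulo $M := 2\cdot 3\cdot 7\cdot p_1\cdots p_k$ that is coprime to $M$ and satisfies $a\not\equiv -432^{-1}\pmod{p_i}$ for each $i$. Dirichlet's theorem on primes in arithmetic progressions then furnishes infinitely many primes $\ell_2\equiv a\pmod{M}$. Every such $\ell_2$ is automatically odd, distinct from $7$, coprime to each $p_i$ (hence to $N$), and satisfies $p_i\nmid 432\ell_2+1$; therefore $\gcd(N,\,432\ell_2^2+\ell_2)=1$ and Theorem \ref{thm-main} produces the Serre pair $(E_{\ell_1},E_{\ell_2})$.

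I do not anticipate any serious obstacle: the proof rests entirely on Dirichlet's theorem together with the elementary bound $p_i\geq 5$. The only bookkeeping worth checking is that the two forbidden residues $0$ and $-432^{-1}$ modulo each $p_i$ are genuinely distinct and do not exhaust the unit group, both of which are immediate once the primes $2$ and $3$ are excluded from consideration by the factor $432$.
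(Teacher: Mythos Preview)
Your proposal is correct and follows essentially the same route as the paper: reduce to Theorem~\ref{thm-main}, list the prime divisors of $\ell_1(432\ell_1+1)$, exclude the two bad residue classes modulo each, and invoke CRT plus Dirichlet. Your version is in fact slightly tidier, since by observing up front that $432\ell_2+1$ is always coprime to $6$ and folding $2,3,7$ into the modulus $M$, you avoid the paper's separate treatment of the case $\ell_1=3$ and its implicit handling of the side conditions $\ell_2\neq 2,7$.
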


\begin{proof}

Let $\Delta=432 \ell_1^2 + \ell_1$ and suppose it factors as $\Delta = p_1^{e_1} \cdots p_n^{e_n}$. 
%Without loss of generality, we may assume $p_1=\ell_1$. 
By Theorem \ref{thm-main}, it suffices to show that there exist infinitely many primes $\ell_2 \nmid \Delta$ such that 
\[
432 \ell_2 + 1 \not\equiv 0 \mod p_i \ \mathrm{ \text{for every} }\  i=1,\ldots,n.
\]

First notice that if $\ell_1 =3$, then by Dirichlet's theorem on primes in arithmetic progressions, there are infinitely many primes $\ell_2$ different from $3$ and $1297$ such that $\ell_2\not\equiv 3\bmod 1297.$ 

% v15
%Otherwise, note that since $3 \nmid \ell_1$, $432$ is a unit modulo $\Delta$, and since $p_i \mid \ell_1(432\ell_1 + 1)$ for all $i$, we see that 
%\[
% \ell_2 \not\equiv \ell_1  \mod p_i  \text{ for all } 1 \leq i \leq n \Longrightarrow 432 \ell_2 + 1 \not\equiv 0 \mod p_i \text{ for all } 2\leq i \leq n.
%\]
%Therefore it suffices to show that $\ell_2 \not\equiv \ell_1  \mod p_i$  for all $1 \leq i \leq n $ and $432 \ell_2 \not\equiv -1 \mod \ell_1$. By the generalized version of Dirichlet's theorem on primes in arithmetic progressions, the prime numbers $\ell_2 \gg 0$ are uniformly distributed among the residue classes modulo $p_i$. Therefore, there is a positive density of primes $\ell_2$ which avoid the aforementioned residue classes modulo each $p_i$, which completes the proof.

Otherwise, if $\ell_1 \neq 3$, then $432 \ell_1 \equiv -1$ is a unit modulo $\Delta$ and since each $p_i \mid \Delta$, we have
\[
432 \ell_2 + 1 \equiv 0 \mod p_i \ \Longrightarrow  \ \ell_2 \equiv \ell_1 \mod p_i.
\]
Therefore, it suffices to show that there are infinitely many $\ell_2$ such that $\ell_2 \not\equiv \ell_1  \mod p_i$  for all $1 \leq i \leq n$. By the Chinese remainder theorem, we can choose $x$ such that $x \not\equiv 0, \ell_1 \mod p_i$ for each $i$.  An application of Dirichlet's theorem on the sequence $\{x + (p_1 \cdots p_n)k\}_{k \in \NN}$  then guarantees the existence of infinitely many primes $\ell_2$ with the desired property.

%By the generalized version of Dirichlet's theorem on primes in arithmetic progressions, the prime numbers $\ell_2 \gg 0$ are uniformly distributed among the residue classes modulo each $p_i$. Thus, for each $i=1,\ldots,n$, the set of primes which are not congruent to $\ell_1$ modulo $p_i$ has density $(p_i-1)/p_i$. By the Chinese remainder theorem, it follows that the density of primes which are not congruent to $\ell_1$ modulo {\em any} of $p_1,\ldots,p_n$ is
%\[
%\prod_{1 \leq i \leq n} \frac{p_i- 1}{p_i} > 0.
%\]
%Thus, there are infinitely many primes $\ell_2$ such that $\ell_2 \not\equiv \ell_1$ modulo any of $p_1,\ldots,p_n$, which completes the proof.
\end{proof}

\begin{rmk}
The quantity $432\ell_i^2+\ell_i$ is the discriminant of the elliptic curve $E_i$. As we discuss below in Proposition \ref{prop-Neron-Ogg-Sha} and Lemma \ref{lem-ramification}, the hypothesis that $\gcd(432\ell_1^2+\ell_1, 432\ell_2^2+\ell_2) = 1$ imposes constraints on the ramification in the division fields associated to our elliptic curves. %This condition seems necessary to guarantee that these fields are maximally disjoint.

\end{rmk}

In order to prove this theorem we will need the following lemma:
\begin{lem}\label{lem-2_conditions}
Let $(E_1,E_2)$ be a pair of rational elliptic curves. If 
\begin{enumerate}
\item for each prime $p\geq 5$, $\Im\bar\rho_{(E_1,E_2),p} = D_p$, and
\item $\Im\bar\rho_{(E_1,E_2),36} = D_{36}$,
\end{enumerate}
then $(E_1,E_2)$ is a Serre Pair. 
\end{lem}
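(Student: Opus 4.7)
The plan is to analyze $[D : H]$, where $H := \Im \rho_{(E_1,E_2)}$, via a Goursat-theoretic decomposition into an ``outer'' factor (controlled by each $E_i$ being a Serre curve) and an ``inner'' factor (controlled by the intersection of the two infinite division fields). Write $G_i := \Im \rho_{E_i}$ and $K_i := \QQ(E_i[\infty])$, so $\Gal(K_i/\QQ) \cong G_i$, and note $H \subseteq D \cap (G_1 \times G_2)$ with surjective projections onto each $G_i$.

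First, I would verify that each $E_i$ is a Serre curve. Projecting hypotheses (1) and (2) onto each factor of $(\GL_2(\ZZ/N\ZZ))^2$ shows that $\bar\rho_{E_i,p}$ is surjective for every prime $p \geq 5$ and that $\bar\rho_{E_i,36}$ (hence $\bar\rho_{E_i,4}$ and $\bar\rho_{E_i,9}$) is surjective. The standard lifting results of Serre then give surjectivity of $\bar\rho_{E_i,p^k}$ for every prime power, and together with Proposition \ref{prop-index2} and the characterization of Serre's index-$2$ character via the Kronecker--Weber theorem (as developed in \cite{Daniels-SC}), this forces $[\GL_2(\hat\ZZ) : G_i] = 2$.

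Next, I would decompose the target index as
\[
[D : H] = [D : D \cap (G_1 \times G_2)] \cdot [D \cap (G_1 \times G_2) : H].
\]
Since each $G_i$ has index $2$ in $\GL_2(\hat\ZZ)$ and $\det G_i = \hat\ZZ^\times$ (cyclotomic character), one has $D \cdot (G_1 \times G_2) = (\GL_2(\hat\ZZ))^2$, and a routine coset count yields $[D : D \cap (G_1 \times G_2)] = 4$. By Goursat's lemma applied inside $G_1 \times G_2$, the subgroup $H$ corresponds to the common quotient $\Gal((K_1 \cap K_2)/\QQ)$, while $D \cap (G_1 \times G_2)$ corresponds to $\hat\ZZ^\times$ (via the determinant), so $[D \cap (G_1 \times G_2) : H] = [K_1 \cap K_2 : \QQ(\zeta_\infty)]$. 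The lemma therefore reduces to showing $K_1 \cap K_2 = \QQ(\zeta_\infty)$.

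Finally, this last equality would follow from the level-by-level statement $\QQ(E_1[N]) \cap \QQ(E_2[N]) = \QQ(\zeta_N)$ for every $N$, by taking the union over $N$. Hypothesis (1) covers $N$ prime with $N \geq 5$, and hypothesis (2) covers every divisor of $36$ (since if the mod-$36$ image is $D_{36}$, then reducing mod $N$ for $N \mid 36$ still yields $D_N$). For the remaining levels, I would lift as follows: for $N = p^k$ with $p \geq 5$ and $k \geq 2$, use the perfectness of $\SL_2(\ZZ/p^k\ZZ)$ to propagate the mod-$p$ result, since no new non-cyclotomic common Goursat quotient can appear in the pro-$p$ kernel of reduction; for $N = 2^k$ or $3^k$ beyond what $36$ controls, argue analogously from the mod-$4$ or mod-$9$ data; and for composite $N = \prod p_i^{e_i}$, combine via the CRT decomposition $D_N \cong \prod D_{p_i^{e_i}}$, relying on the fact that $\PSL_2(\ZZ/p\ZZ)$ for distinct primes $p \geq 5$ are pairwise non-isomorphic simple groups, so the division-field contributions at distinct primes are linearly disjoint over their cyclotomic compositum. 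The main obstacle is this composite step: small abelian quotients at the primes $2$ and $3$ threaten to create cross-prime common quotients, and hypothesis (2) is precisely the input needed to rule these out uniformly.
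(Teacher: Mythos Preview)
The paper's proof is a single sentence: it cites \cite[Lemma 3.1]{Jones2013} and says nothing more. Your proposal, by contrast, is an attempt to reprove that lemma of Jones from scratch. So the two ``approaches'' are different by construction: the paper outsources the argument, you try to supply it.

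Your outline is essentially correct and follows the natural Goursat-theoretic strategy. The decomposition $[D:H] = [D : D \cap (G_1 \times G_2)] \cdot [D \cap (G_1 \times G_2) : H]$ is valid, the computation of the first factor as $4$ (once each $E_i$ is a Serre curve) is right, and the identification of the second factor with $[K_1 \cap K_2 : \QQ(\zeta_\infty)]$ via Goursat is the standard fiber-product calculation. The reduction to showing $\QQ(E_1[N]) \cap \QQ(E_2[N]) = \QQ(\zeta_N)$ level by level, then lifting from $p$ (or $4$, $9$) to all prime powers, is also the right plan.

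Two places deserve more care. First, your Step~1 (deducing that each $E_i$ is individually a Serre curve from surjectivity mod $p$ for $p \ge 5$ and mod $36$) is itself the $k=1$ case of the result you are proving; it is true, but it already requires the commutator/abelianization analysis you defer to the end, so invoking it up front is somewhat circular in presentation. Second, your final sentence slightly mislocates the role of hypothesis~(2). The cross-prime entanglement between $p \in \{2,3\}$ and $q \ge 5$ is \emph{not} handled by the mod-$36$ hypothesis; it is handled by the fact that $\SL_2(\ZZ/q^f\ZZ)$ is perfect for $q \ge 5$, so every solvable quotient of $D_{q^f}$ factors through the determinant and is therefore already cyclotomic. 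Hypothesis~(2) is needed only for the $2$--$3$ entanglement, where $\SL_2(\ZZ/4\ZZ)$ and $\SL_2(\ZZ/9\ZZ)$ fail to be perfect and genuine non-cyclotomic abelian quotients of $D_4$ and $D_9$ exist. With that correction, your sketch matches what Jones actually proves.
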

\begin{proof}
This follows immediately from \cite[Lemma 3.1]{Jones2013}.
\end{proof}

Lemma \ref{lem-2_conditions} gives us two concrete conditions that we use to verify our pairs of elliptic curves are in fact Serre pairs. 

\subsection{Notation and Outline}

Throughout the rest of this paper, fix two odd primes $\ell_1$ and $\ell_2$, both different from $7$, such that $\gcd(432\ell_1^2+\ell_1,432\ell_2^2+\ell_2)=1$. For $i=1,2$ we will write $$E_i : y^2 + xy = x^3 + \ell_i.$$ Then by Lemma \ref{lem-ex_serre_curves}, $E_1$ and $E_2$ are both Serre curves. In particular, as explained in \cite{Daniels-SC}, we have that
\[
\bar\rho_{E_i,p^n} : G_\QQ \to \GL_2(\ZZ / p^n\ZZ)
\] 
is surjective for every prime $p$ and every integer $n \geq 1$.

Our strategy is to use Lemma \ref{lem-2_conditions} to prove that $(E_1,E_2)$ is a Serre pair. Thus, our paper divides naturally into two main sections: a study of $\bar\rho_{(E_1,E_2),p}$ for all primes $p \geq 5$, and a separate study of $\bar\rho_{(E_1,E_2),36}$. In both cases, we interpret the conditions of Lemma \ref{lem-2_conditions} in terms of the Galois theory of the division fields associated to the Serre curves $E_i$. Let $K_i = \QQ(E_i[p^n])$ denote the Galois number field obtained by adjoining to $\QQ$ the coordinates of the $p^n$-torsion points of $E_i$. The Weil pairing forces the intersection $K_1 \cap K_2$ to be a non-trivial extension of $\QQ$; in particular, the intersection contains the $p^n$-cyclotomic field $\QQ(\zeta_{p^n})$. The main results of this paper state that, apart from the cyclotomic subextension, the division fields $K_1$ and $K_2$ are maximally disjoint for all primes $p$ and all integers $n \geq 1$. Theorem \ref{thm-main} then follows directly from the conditions found in Lemma \ref{lem-2_conditions}.     

\section{$p$-Division fields for $p \geq 5$}\label{section-p5andup}

For the entirety of this section fix a prime $p \geq 5$ and, since $\ell_1 \neq \ell_2$, assume without loss of generality that $p \neq \ell_1$.  Let $K_i=\QQ(E_i[p])$ denote the number field obtained by adjoining to $\QQ$ the $x$- and $y$-coordinates of the $p$-torsion points of $E_i$. %Similarly, let $K_i' = \QQ(x(E_i[p]))$ be field obtained by adjoining only the $x$-coordinates. 
Since $E_i$ is a Serre curve, we have
\[
\mathrm{Gal}(K_i/\QQ) \simeq \GL_2(\ZZ / p \ZZ). 
\]
As explained in the introduction, the Weil pairing forces the inclusion $\QQ(\zeta_p) \subset K_i$, where $\zeta_p$ denotes a primitive $p$-th root of unity and $\QQ(\zeta_p)$ denotes the $p$-cyclotomic extension of $\QQ$. Let $F=K_1 \cap K_2$ denote the intersection of the two division fields; then $F \supset \QQ(\zeta_p)$ is strictly larger than $\QQ$.

Recall that condition (1) of Lemma \ref{lem-2_conditions} states the following:
\begin{equation}\label{condition1}
\Im\bar\rho_{(E_1,E_2),p} = D_p, \mathrm{where}\ D_p = \left\{(A,B)\in\Big(\GL_2(\ZZ/p\ZZ)\Big)^2: \det A =\det B\right\}
\end{equation}
This condition can be interpreted using the Galois theoretic properties of the $K_i$, as we now describe.

First, recall that the determinant of $\bar\rho_{E_i,p}$ is the cyclotomic character $\chi_p$, which cuts out the cyclotomic extension $\QQ(\zeta_p)/\QQ$ via the canonical isomorphism  $\chi_p: \mathrm{Gal}(\QQ(\zeta_p) / \QQ) \xrightarrow{\sim} (\ZZ / p \ZZ)^\times$.

Now let $L=K_1 K_2$ denote the compositum of the division fields. Then $\mathrm{Gal}(L/\QQ)$ is a subgroup of the direct product $\GL_2(\ZZ / p\ZZ) \times \GL_2(\ZZ / p\ZZ)$. Since the intersection $F $ is a nontrivial extension of $\QQ$, $\mathrm{Gal}(L/\QQ)$ must be a \textit{proper} subgroup. The following result is well-known.

\begin{lem}[Goursat's Lemma]\label{lemma-Goursat}
Let $G_1$ and $G_2$ be groups, and let $H$ be a subgroup of the direct product $G_1 \times G_2$ such that the natural projections $\pi_1:H \to G_1$ and $\pi_2:H \to G_2$ are surjective. Let $N_1$ denote the kernel of $\pi_2$ and $N_2$ denote the kernel of $\pi_1$. Then regarding $N_i$ as a subgroup of $G_i$, the image of $H$ in $G_1/N_1 \times G_2/N_2$ is the graph of an isomorphism $G_1/N_1 \simeq G_2/N_2$. 
\end{lem}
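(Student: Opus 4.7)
The plan is to directly construct the claimed isomorphism $\phi: G_1/N_1 \to G_2/N_2$ and then observe that the image of $H$ in the product of quotients is exactly its graph. First I would verify the basic setup: $N_1 = \ker(\pi_2) = \{(g,e_2) \in H\}$, so projecting to the first factor identifies $N_1$ with a subgroup of $G_1$, and analogously for $N_2 \subset G_2$. Normality is immediate from conjugation in $H$: given any $g_1 \in G_1$, the surjectivity of $\pi_1$ produces $(g_1,g_2) \in H$, and for $(n,e_2) \in N_1$ we compute $(g_1,g_2)(n,e_2)(g_1,g_2)^{-1} = (g_1 n g_1^{-1}, e_2) \in H$, so $g_1 n g_1^{-1} \in N_1$. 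The symmetric argument handles $N_2 \trianglelefteq G_2$.

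Next I would define $\phi$ by the rule $\phi(g_1 N_1) = g_2 N_2$ whenever $(g_1, g_2) \in H$, and check that this is well-defined, a homomorphism, and bijective. If $(g_1, g_2)$ and $(g_1, g_2')$ both lie in $H$, then $(e_1, g_2' g_2^{-1}) \in \ker(\pi_1)$, so $g_2' g_2^{-1} \in N_2$; this shows the choice of lift in the second coordinate is irrelevant modulo $N_2$. Similarly, if $g_1' = g_1 n$ with $n \in N_1$, then $(n,e_2) \in H$, and any lift $(g_1', g_2')$ satisfies $(g_1, g_2') = (g_1', g_2')(n,e_2)^{-1} \in H$, so after the previous step we get $g_2' N_2 = g_2 N_2$. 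The homomorphism property follows from the group structure of $H$. Surjectivity is immediate from $\pi_2$ being surjective. For injectivity, if $\phi(g_1 N_1) = N_2$ then some $(g_1,g_2) \in H$ has $g_2 \in N_2$, meaning $(e_1, g_2) \in H$; then $(g_1, e_2) = (g_1,g_2)(e_1,g_2)^{-1} \in H$, so $g_1 \in N_1$.

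Finally, I would read off the image of $H$ in $G_1/N_1 \times G_2/N_2$. Every element $(g_1, g_2) \in H$ projects to $(g_1 N_1, g_2 N_2) = (g_1 N_1, \phi(g_1 N_1))$, showing containment of the image in the graph; conversely each coset $g_1 N_1$ is hit because $\pi_1$ is surjective, so the entire graph is attained. None of the steps is a real obstacle: the argument is a direct verification that leverages surjectivity of the projections to manipulate representatives freely, and the main conceptual point to keep straight is that the ambiguity in choosing lifts is precisely absorbed by $N_1$ and $N_2$ -- this is the content of identifying them as the kernels of the opposite projections.
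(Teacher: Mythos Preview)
Your argument is correct and is the standard direct proof of Goursat's Lemma: verify normality of the $N_i$, define the map on cosets using surjectivity of the projections, and check well-definedness and bijectivity by chasing representatives. The paper itself does not supply a proof but simply cites \cite[Lemma 5.2.1]{Ribet}; your write-up is exactly the kind of verification one finds behind that reference, so there is no substantive difference in approach.
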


\begin{proof}
See \cite[Lemma 5.2.1]{Ribet}.
\end{proof}

Write $G_i=\mathrm{Gal}(K_i/\QQ)$, and for the moment let $H=\mathrm{Gal}(L/\QQ)$. Goursat's lemma shows that $H$ is a certain fibered product of $G_1$ and $G_2$. Furthermore, since we have $G_1 \simeq G_2 \simeq \GL_2(\ZZ / p \ZZ)$, $H$ is determined by a unique normal subgroup $N$ of $\GL_2(\ZZ / p \ZZ)$.  For example, if $H$ were equal to the entire direct product $\GL_2(\ZZ / p \ZZ) \times \GL_2(\ZZ / p \ZZ)$, then we would have $N= \GL_2(\ZZ / p \ZZ)$, and the common fixed field $F= K_1 \cap K_2$ would be equal to $\QQ$.

Goursat's lemma thus gives the following Galois-theoretic interpretation of (\ref{condition1}): since $\det \bar\rho_{E_i,p} = \chi_p$ cuts out $\QQ(\zeta_p)$, we have 
\[
\Im\bar\rho_{(E_1,E_2),p} = D_p \Longleftrightarrow F = \QQ(\zeta_p).
\]
So (\ref{condition1}) is equivalent to the statement that $H$ is the fibered product of $G_1$ and $G_2$ over $\QQ(\zeta_p)$, which is equivalent to $K_1$ and $K_2$ being maximally disjoint. Our goal is now to show that $F = \QQ(\zeta_p).$

 To that end, let us now set $H:=\mathrm{Gal}(L/\QQ(\zeta_p))$. Figure \ref{diagram-p-atleast-5} illustrates the associated field diagram with edges labeled by Galois groups.

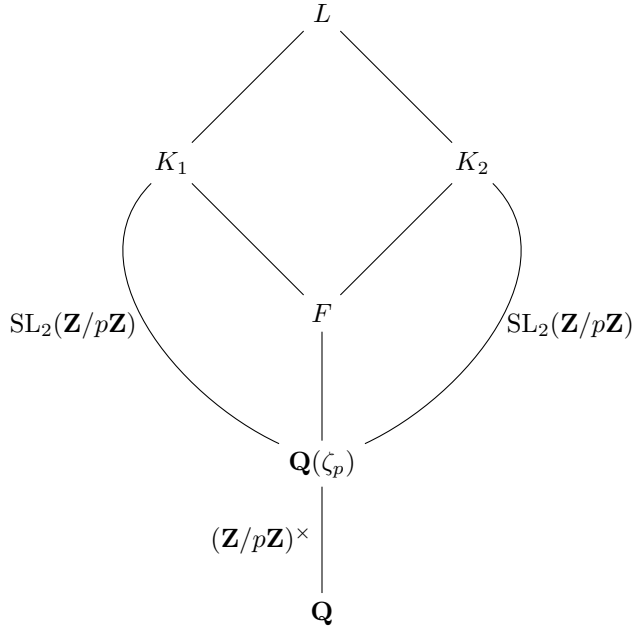
\begin{figure}
 \begin{tikzpicture}[node distance = 2cm, auto]
 	\centering
      \node (Q)       {$\QQ$};
      \node (Qzeta) [above of=Q]  {$\QQ(\zeta_p)$};
      \node (F)  [above of=Qzeta] {$F$};
      \node (K1) [above of=F, left of=F] {$K_1$};
      \node (K2) [above of=F, right of=F] {$K_2$};
      \node (L)  [above of=F, node distance = 4cm] {$L$};
      \draw[-] (Q) to node {$(\ZZ / p \ZZ)^\times$} (Qzeta);
      \draw[-] (Qzeta) to node {} (F);
      \draw[-] (F) to node {} (K1);
      \draw[-] (F) to node {} (K2);
      \draw (Qzeta) to [out=155,in=225]node[left,midway] {$\SL_2(\ZZ / p \ZZ)$} (K1);
      \draw (Qzeta) to [out=25,in=315]node[right,midway]  {$\SL_2(\ZZ / p \ZZ)$} (K2);
      \draw[-] (K1) to node {} (L);
      \draw[-] (K2) to node {} (L);
     % \draw[-] (F) to node [swap] {$n_4$} (K);
      \end{tikzpicture}
      \caption{Division fields for $p \geq 5$} \label{diagram-p-atleast-5}
\end{figure}

$H$ is a subgroup of the direct product $\SL_2(\ZZ / p \ZZ) \times \SL_2(\ZZ / p \ZZ)$, and we wish to show that $H \simeq (\SL_2(\ZZ / p \ZZ))^2$. Since $E_1$ and $E_2$ are Serre curves, the natural projections $H \to \SL_2(\ZZ /p \ZZ)$ are surjective, and Goursat's lemma implies that $H$ is determined by a normal subgroup $N \triangleleft \SL_2(\ZZ / p \ZZ)$. As in our previous discussion, we will have $F = \QQ(\zeta_p)$ precisely if $N = \SL_2(\ZZ / p \ZZ)$.

Before proving the main result of this section, we collect some lemmas on the ramification behavior of primes in the $K_i$. One computes that $E_{\ell_i}: y^2+xy = x^3+\ell_i$
has discriminant
\[
\Delta(E_{\ell_i})= - \ell_i (432 + \ell_i).
\]
Recall that the only primes of bad reduction for $E_i$ are those dividing $\Delta(E_i)$. The following result states that these are also the only primes other than $p$ which may ramify in $K_i/\QQ$.
\begin{prop}[Neron, Ogg, Shafarevich]\label{prop-Neron-Ogg-Sha}
Let $E$ be an elliptic curve over $\QQ$, and let $p$ be a rational prime. Then the following assertions are equivalent:
\begin{itemize}
\item $E$ has good reduction modulo $p$.
\item $p$ is unramified in $\QQ(E[n])/\QQ$ for all integers $n \geq 1$ with $\gcd(n,p)=1$.
\end{itemize}
\end{prop}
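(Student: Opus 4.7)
The plan is to reduce this to the classical N\'eron-Ogg-Shafarevich criterion in its Tate-module formulation: an elliptic curve $E/\QQ$ has good reduction at $p$ if and only if the inertia subgroup $I_p \subset G_{\QQ_p}$ acts trivially on the $\ell$-adic Tate module $T_\ell E$ for some (equivalently, every) prime $\ell \neq p$. The translation between this formulation and the stated proposition is formal: $p$ is unramified in $\QQ(E[n])/\QQ$ precisely when $I_p$ fixes $E[n]$ pointwise, and requiring this for every $n$ coprime to $p$ is equivalent, via the Chinese remainder theorem and passage to the inverse limit, to trivial $I_p$-action on $T_\ell E$ for some prime $\ell \neq p$.

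For the easy direction, I would fix $\ell \neq p$ and assume $E$ has good reduction at $p$, with smooth reduction $\tilde E/\FF_p$. The reduction map $E[\ell^n] \to \tilde E(\bar\FF_p)[\ell^n]$ is injective because its kernel lies in the formal group at $p$, whose geometric torsion is $p$-power. Both source and target have order $\ell^{2n}$, so this reduction is a bijection equivariant for the decomposition group $D_p$. Since $I_p$ acts trivially on $\bar\FF_p$ and hence on $\tilde E(\bar\FF_p)$, it must act trivially on $E[\ell^n]$, and so $p$ is unramified in $\QQ(E[\ell^n])$; taking inverse limits and varying $\ell$ gives the result for all $n$ prime to $p$.

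The converse is the deep content of the criterion. Following Serre-Tate, one considers the N\'eron model $\mathcal E / \ZZ_{(p)}$ and analyzes the identity component $\mathcal E_s^0$ of its special fiber. In the two bad-reduction cases, $\mathcal E_s^0$ is either a torus (multiplicative reduction) or the additive group (additive reduction). In the multiplicative case the Tate curve uniformization over $\QQ_p^{\mathrm{unr}}$ exhibits an explicit nontrivial $I_p$-action on $T_\ell E$ through a $\ZZ_\ell(1)$-valued character and its untwisted companion; in the additive case Grothendieck's semistable reduction theorem shows that inertia acts through a nontrivial finite quotient. Contrapositively, trivial $I_p$-action on $T_\ell E$ forces $\mathcal E_s^0$ to be an abelian variety, i.e.\ good reduction.

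The main obstacle is this reverse implication, which requires the theory of N\'eron models; rather than reproduce the argument I would cite Silverman, \emph{Advanced Topics in the Arithmetic of Elliptic Curves}, Chapter VII, Theorem 7.1, which is the standard reference for this criterion in the form used here.
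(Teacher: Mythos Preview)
Your proposal is correct and takes essentially the same approach as the paper, which simply defers to the standard reference; you go further by sketching the argument before citing. One small correction: the reference you want is Silverman's \emph{The Arithmetic of Elliptic Curves}, Chapter~VII, Theorem~7.1, not \emph{Advanced Topics} (which has no Chapter~VII).
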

\begin{proof}
See \cite[VII. Theorem 7.1]{AEC}
\end{proof}

By hypothesis we have $\gcd(\Delta(E_1),\Delta(E_2))=1$, so $\ell_2$ does not ramify in $K_1$. The next lemma gives a lower bound on the ramification of $\ell_1$ in $K_1$.

\begin{lem}\label{lem-ramification}
Let $e_{\ell_i}$ denote the ramification index of $\ell_i$ in $K_i / \QQ$. Then $e_{\ell_i} \geq p$.
\end{lem}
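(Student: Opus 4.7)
The plan is to exploit the multiplicative reduction of $E_i$ at $\ell_i$, together with the theory of the Tate curve, to produce the required ramification in $\QQ_{\ell_i}(E_i[p])$.

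The first step is to verify that $E_i$ has multiplicative reduction at $\ell_i$ with $v_{\ell_i}(j(E_i)) = -1$. For the given Weierstrass model $y^2 + xy = x^3 + \ell_i$, a direct computation gives $c_4(E_i) = b_2^2 - 24 b_4 = 1$, so $v_{\ell_i}(c_4) = 0$; combined with $v_{\ell_i}(\Delta(E_i)) = 1$ (since $\Delta(E_i) = -\ell_i(432\ell_i+1)$ and $\ell_i \nmid 432\ell_i + 1$), the formula $j = c_4^3/\Delta$ yields $v_{\ell_i}(j(E_i)) = -1$, so $E_i$ has multiplicative reduction at $\ell_i$. Next I would invoke Tate's theorem: after at most an unramified quadratic base change, there is a parameter $q$ with $v_{\ell_i}(q) = -v_{\ell_i}(j(E_i)) = 1$ and a Galois-equivariant isomorphism $E_i(\bar\QQ_{\ell_i}) \simeq \bar\QQ_{\ell_i}^\times / q^\ZZ$, under which $E_i[p]$ is generated as a $\ZZ/p\ZZ$-module by a primitive $p$-th root of unity $\zeta_p$ and a choice of $p$-th root $q^{1/p}$. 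Because $p \neq \ell_i$, $\zeta_p$ already lies in $\QQ_{\ell_i}^{\unr}$ and contributes no ramification, while $\QQ_{\ell_i}^{\unr}(q^{1/p})/\QQ_{\ell_i}^{\unr}$ is a totally ramified Kummer extension of degree exactly $p$ since $\gcd(v_{\ell_i}(q),p) = \gcd(1,p) = 1$. Consequently, the completion of $K_i$ at any prime above $\ell_i$ contains a totally ramified extension of $\QQ_{\ell_i}^{\unr}$ of degree $p$, forcing $e_{\ell_i} \geq p$.

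The only subtle point is the non-split multiplicative case, where the Tate parametrization is only defined after an unramified quadratic base change; but since that twist is unramified, it does not affect the calculation of $e_{\ell_i}$, so the argument goes through verbatim. Aside from this bookkeeping, the argument is a standard local computation on the Tate curve, so I do not anticipate any serious obstacle.
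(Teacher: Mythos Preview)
Your argument is correct and follows the same Tate-curve approach as the paper, which outsources the local computation to \cite{Lozano-Lundell} rather than writing it out directly. The only caveat is that you tacitly assume $p \neq \ell_i$, whereas the section's standing hypothesis only guarantees $p \neq \ell_1$; the paper's cited formula also handles the case $p = \ell$ (yielding $e_\ell = p(p-1)$), but since the lemma is applied only to $\ell_1$ in Proposition~\ref{prop-p-atleast-5}, your version already covers the case that is actually used.
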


\begin{proof}
This is worked out in detail in \cite[Section 3.2]{Lozano-Lundell} using the theory of Tate curves. For the proof, we drop the $i$ subscripts and write simply $E=E_i$ and $\ell=\ell_i$. First, note that the discriminant of $E$ is
\[
\Delta(E)= - \ell (432 + \ell).
\] 
In particular, the $\ell$-adic valuation of $\Delta(E)$ is 
\[
\nu_\ell(\Delta(E))= \begin{cases}
1 & \mathrm{if}\ \ell \neq 3  \\ 
2 & \mathrm{if}\ \ell = 3 
\end{cases}
\]
and $E$ has bad (split multiplicative) reduction at $\ell$. Our elliptic curve has $j$-invariant $j_E=\frac{1}{\Delta(E)}$, so in the notation of \cite{Lozano-Lundell} we have $\alpha_\ell=\nu_p(-\nu_\ell(j_E))=0$. By displayed equations (3.4)--(3.7) of \cite[Section 3.2]{Lozano-Lundell}, we have

\[ e_{\ell} = \begin{cases} 
      (p-1)p & \mathrm{if}\ p=\ell, \\
      p & \mathrm{if}\ p \neq \ell 
   \end{cases}.
\]
Thus, in either case we have $e_\ell \geq p$.

\end{proof}

We are now prepared to prove the following.

\begin{prop}\label{prop-p-atleast-5}
Let $N$ denote the kernel of (either) projection map $H \to \SL_2(\ZZ / p \ZZ)$. Then $N = \SL_2(\ZZ / p \ZZ)$, and consequently $\Im \bar \rho_{(E_1,E_2),p} = D_p$.
\end{prop}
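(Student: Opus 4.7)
The plan is to combine Goursat's Lemma (Lemma \ref{lemma-Goursat}) with the ramification facts recorded in Proposition \ref{prop-Neron-Ogg-Sha} and Lemma \ref{lem-ramification}. Since $E_1$ and $E_2$ are Serre curves, both natural projections $H \to \SL_2(\ZZ/p\ZZ)$ are surjective, so Goursat's Lemma identifies $N$, viewed as a normal subgroup of $\SL_2(\ZZ/p\ZZ)$, with $\Gal(K_1/F)$ via the restriction isomorphism $\Gal(L/K_2) \xrightarrow{\sim} \Gal(K_1/F)$. Because $\PSL_2(\ZZ/p\ZZ)$ is simple for $p \geq 5$, the only normal subgroups of $\SL_2(\ZZ/p\ZZ)$ are $\{I\}$, $\{\pm I\}$, and $\SL_2(\ZZ/p\ZZ)$ itself, with orders $1$, $2$, and $p(p^2-1)$. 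Ruling out the first two alternatives therefore reduces to showing that $p \mid |N|$.

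The plan for that divisibility is a standard comparison of ramification. The coprimality hypothesis $\gcd(\Delta(E_1), \Delta(E_2)) = 1$ yields $\ell_1 \nmid \Delta(E_2)$, so $E_2$ has good reduction at $\ell_1$; combined with $\ell_1 \neq p$, Proposition \ref{prop-Neron-Ogg-Sha} guarantees that $\ell_1$ is unramified in $K_2/\QQ$, hence unramified in the subfield $F$ as well. On the other hand, Lemma \ref{lem-ramification} produces a ramification index of at least $p$ for $\ell_1$ in $K_1/\QQ$. By multiplicativity of ramification indices in the tower $\QQ \subset F \subset K_1$, the ramification index of $\ell_1$ in $K_1/F$ is then itself at least $p$, so the inertia subgroup at any prime of $K_1$ above $\ell_1$ provides an element of $N = \Gal(K_1/F)$ of order divisible by $p$. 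This forces $N = \SL_2(\ZZ/p\ZZ)$, and the final assertion $\Im \bar\rho_{(E_1,E_2),p} = D_p$ is then the Galois-theoretic reformulation ($F = \QQ(\zeta_p)$) already established immediately before the statement of the proposition.

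The one step that requires real care is the identification of Goursat's $N$ with $\Gal(K_1/F)$ as an actual subgroup of $\SL_2(\ZZ/p\ZZ)$: I would verify explicitly that the elements $(g,1) \in H$ correspond, via restriction to $K_1$, to $\Gal(K_1/F)$ sitting inside $\Gal(K_1/\QQ(\zeta_p))$, so that the normal subgroup of $\SL_2(\ZZ/p\ZZ)$ produced by Goursat is literally the Galois group to which the ramification argument applies. Once that bookkeeping is in place, the rest is immediate, since the hypothesis $\gcd(432\ell_1^2+\ell_1, 432\ell_2^2+\ell_2) = 1$ was engineered precisely so that $\ell_1$ cannot ramify in $K_2$ while still ramifying enough in $K_1$ to contribute $p$-power inertia in $K_1/F$.
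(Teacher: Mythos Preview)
Your argument is correct and follows the same overall strategy as the paper: apply Goursat's Lemma, classify the normal subgroups of $\SL_2(\ZZ/p\ZZ)$, and rule out the small ones by comparing the ramification of $\ell_1$ in $K_1$ versus $K_2$ (hence $F$). Your execution is in fact tighter than the paper's in two respects. First, you correctly list the normal subgroups of $\SL_2(\ZZ/p\ZZ)$ for $p\geq 5$ as $\{I\}$, $\{\pm I\}$, and $\SL_2(\ZZ/p\ZZ)$; the paper asserts a direct product decomposition $\SL_2(\ZZ/p\ZZ)\simeq \langle\pm I\rangle\times \PSL_2(\ZZ/p\ZZ)$ and therefore includes a spurious fourth candidate $N=\PSL_2(\ZZ/p\ZZ)$, which it then eliminates by an appeal to the subfield lattice in Adelmann. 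That decomposition is false (for $p\geq 5$ the extension is non-split, as $-I$ is the unique involution in $\SL_2(\FF_p)$), so your shorter list is the right one and the extra case never arises. Second, where the paper treats $N=\{I\}$ and $N=\{\pm I\}$ separately, you dispose of both at once by producing inertia of order at least $p$ inside $\Gal(K_1/F)$. One small wording fix: from $e_{\ell_1}(K_1/F)\geq p$ you immediately get $|N|\geq p>2$, which already forces $N=\SL_2(\ZZ/p\ZZ)$; the phrase ``an element of order divisible by $p$'' is not needed (and does not follow from $e\geq p$ alone without the exact computation $e=p$ in the proof of Lemma~\ref{lem-ramification}).
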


\begin{proof}
Recall that we have a decomposition
\[
\SL_2(\ZZ / p \ZZ) \simeq \langle \pm I \rangle \times \PSL_2(\ZZ / p \ZZ),
\]
where $I$ denotes the identity matrix, and where the projective special linear group $\PSL_2(\ZZ / p \ZZ)$ is a simple group since $p \geq 5$ \cite[Proposition 5.1.7]{Adelmann}. Thus, $H$ is determined by a normal subgroup $N \triangleleft \SL_2(\ZZ / p \ZZ)$, and the only possibilities are
\[
N \in \Big\{ \{I\}, \{\pm I\}, \PSL_2(\ZZ / p \ZZ), \SL_2(\ZZ / p \ZZ) \Big\}.
\]
Recall that $F=K_1 \cap K_2$ and $F \supset \QQ(\zeta_p)$. By Goursat's lemma and the Galois correspondence, we have that the index $[\SL_2(\ZZ / p \ZZ):N]$ is equal to the degree of the extension $[F:\QQ(\zeta_p)]$. Thus we see that $F$ is strictly larger than $\QQ(\zeta_p)$ if and only if $N \neq \SL_2(\ZZ / p \ZZ)$. 

If $N=\{I\}$, then in fact $F = K_1 K_2$; this is impossible, as $\ell_1$ ramifies in $K_1$ but not in $K_2$ by Proposition \ref{prop-Neron-Ogg-Sha} and the fact that $\ell_1\nmid \Delta(E_2)=\ell_2(432+\ell_2)$.

If $N=\langle \pm I \rangle$, then $[K_1:F]=2$. But by Lemma \ref{lem-ramification}, the ramification index of $\ell_1$ in $K_1 / \QQ(\zeta_p)$ is bigger than $2$, and $\ell_1$ is unramified in $K_2$ (and hence in $F$), so this impossible.

Finally, if $N=\PSL_2(\ZZ / p \ZZ)$, then $[F:\QQ(\zeta_p)]=2$, hence $F/\QQ$ is a degree $2(p-1)$ Galois extension. However, no such Galois subextensions of $K_1$ or $K_2$ exist; see the discussion and diagram following \cite[Remark 5.2.1]{Adelmann}.

Thus, the only possibility which our hypotheses allow is $N=\SL_2(\ZZ / p \ZZ)$ as desired. 

\end{proof}

\section{$p^2$-Division fields for $p=2,3$}\label{Section-p23}

In this section, we deal with Condition (2) of Lemma \ref{lem-2_conditions}, so given a pair $(E_1,E_2)$ as before, we now wish to show that
\begin{equation}\label{condition2}
\Im\bar\rho_{(E_1,E_2),36} = D_{36}.
\end{equation}
Similar to the setup in Section \ref{section-p5andup}, for $i=1,2$, let $K_{i,n}=\QQ(E_i[n])$ denote the $n$-Division field of $E_i$, which is the number field obtained by adjoining to $\QQ$ the $x$- and $y$- coordinates of the $n$-torsion points of $E_i$. Since $E_i$ is a Serre curve, we have
\[
\mathrm{Gal}(K_{i,36}/\QQ) \simeq \GL_2(\ZZ / 36 \ZZ).
\]
Once again, the Weil pairing forces an inclusion $\QQ(\zeta_{36}) \subset K_{i,36}$, where $\zeta_{36}$ is a primitive $36$-th root of unity. It follows that $K_{1,36} \cap K_{2,36} \supset \QQ(\zeta_{36})$ is a nontrivial extension of $\QQ$.  Just as in the $p \geq 5$ case, this implies that the Galois group $\mathrm{Gal}(L/\QQ)$ of the compositum $L=K_{1,36} K_{2,36}$ is a {\em proper} subgroup of $\big(\GL_2(\ZZ / 36 \ZZ) \big)^2$, determined (via Goursat's lemma) by a normal subgroup of $\GL_2(\ZZ / 36 \ZZ)$. Condition (\ref{condition2}) is equivalent to the statement that $K_{1,36}$ and $K_{2,36}$ are maximally disjoint in the sense that
\[
\Im\bar\rho_{(E_1,E_2),36} = D_{36} \Longleftrightarrow K_1 \cap K_2 = \QQ(\zeta_{36}).
\]

For $i=1,2$ Figure \ref{diagram-36-division-field} illustrates the decomposition of $K_{i,36}$ in terms of smaller division fields. The edges are marked by Galois groups, which are determined by the fact that $E_i$ is a Serre curve.

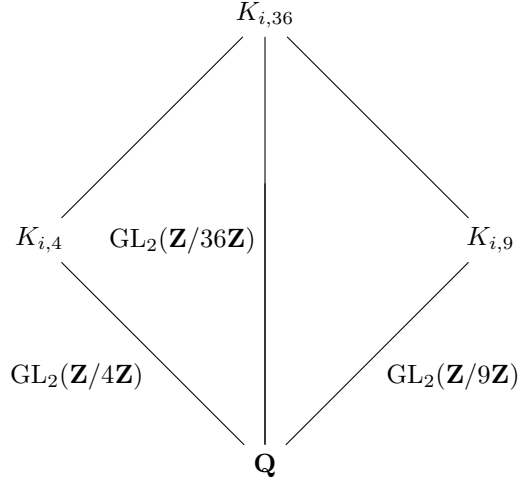
\begin{figure}
\centering
 \begin{tikzpicture}[node distance = 3cm, auto]
      \node (Q)       {$\QQ$};
      \node (K4) [above of=Q, left of=Q] {$K_{i,4}$};
      \node (K9) [above of=Q, right of=Q] {$K_{i,9}$};
      \node (K36)  [above of=Q, node distance = 6cm] {$K_{i,36}$};
      \draw[-] (Q) to node {} (F);
 %     \draw[-] (Qzeta) to node {} (F);
      \draw[-] (Q) to node {$\GL_2(\ZZ / 4 \ZZ)$} (K4);
      \draw[-] (Q) to node [swap] {$\GL_2(\ZZ / 9 \ZZ)$} (K9);
      \draw[-] (Q) to node {$\GL_2(\ZZ / 36 \ZZ)$} (K36);
      \draw[-] (K4) to node {} (K36);
      \draw[-] (K9) to node {} (K36);
      \end{tikzpicture}
      \caption{Decomposition of the 36-Division Fields for $E_i$}\label{diagram-36-division-field}
\end{figure}

Noting that $\GL_2(\ZZ / 36 \ZZ) \simeq \GL_2(\ZZ / 4 \ZZ) \times \GL_2(\ZZ / 9 \ZZ)$, we see that Figure \ref{diagram-36-division-field} and Goursat's lemma imply that $K_{i,4} \cap K_{i,9} = \QQ$. Furthermore, since $\mathrm{Gal}(L/\QQ)$ is a subgroup of $\mathrm{Gal}(K_{1,36} / \QQ) \times \mathrm{Gal}(K_{2,36} / \QQ)$, the same diagram shows that verifying (\ref{condition2}) is equivalent to verifying the following three assertions:
\begin{itemize}
\item $K_{1,4} \cap K_{2,4} = \QQ(\zeta_4)$;
\item $K_{1,9} \cap K_{2,9} = \QQ(\zeta_9)$;
\item $K_{i,4} \cap K_{j,9} = \QQ$ \ for $i \neq j$.
\end{itemize}

We now handle each case in turn. For the rest of the section, let $\Delta_i= - \ell_i (432\ell_i + 1)$ denote the discriminant of $E_i$. Just as in Section \ref{section-p5andup}, our arguments will depend crucially on our hypothesis that $\gcd(\Delta_1,\Delta_2)=1$.

\begin{lem}\label{lemma-4-division}
For our pair $(E_1,E_2)$, we have $K_{1,4} \cap K_{2,4} = \QQ(\zeta_4)$.
\end{lem}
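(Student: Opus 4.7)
My plan is to let $F = K_{1,4} \cap K_{2,4}$, invoke Goursat's lemma to reduce the claim $F = \QQ(\zeta_4)$ to identifying a specific normal subgroup of $\GL_2(\ZZ/4\ZZ)$, and rule out all but the desired case using ramification driven by the coprimality $\gcd(\Delta_1,\Delta_2) = 1$.

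Since $F$ is Galois over $\QQ$ and contains $\QQ(\zeta_4)$ by the Weil pairing, and since $\Gal(K_{i,4}/\QQ) \simeq \GL_2(\ZZ/4\ZZ)$ for the Serre curves $E_i$, Lemma \ref{lemma-Goursat} expresses $\Gal(F/\QQ)$ as a common quotient $\GL_2(\ZZ/4\ZZ)/N$ for a normal subgroup $N \triangleleft \GL_2(\ZZ/4\ZZ)$, which must lie inside $\SL_2(\ZZ/4\ZZ)$ because $F \supseteq \QQ(\zeta_4)$. Using the classical isomorphism $\PSL_2(\ZZ/4\ZZ) \simeq S_4$, the only such candidates are $\{I\}$, $\{\pm I\}$, the principal congruence subgroup $K_{\mathrm{red}} := \ker(\SL_2(\ZZ/4\ZZ) \to \SL_2(\ZZ/2\ZZ))$ of order $8$, the commutator subgroup $[\GL_2(\ZZ/4\ZZ),\GL_2(\ZZ/4\ZZ)]$ of order $24$, and $\SL_2(\ZZ/4\ZZ)$ itself. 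The target is $N = \SL_2(\ZZ/4\ZZ)$.

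The common input for all eliminations is ramification. By Proposition \ref{prop-Neron-Ogg-Sha}, the primes ramifying in $K_{i,4}$ divide $2\Delta_i$; since any prime ramified in $F$ must ramify in both $K_{1,4}$ and $K_{2,4}$, the coprimality hypothesis forces $F/\QQ$ to be unramified outside $2$. On the other hand, a Tate-uniformization argument completely analogous to Lemma \ref{lem-ramification} shows that $\ell_i$ ramifies in $K_{i,4}$ with ramification index at least $4$, because $E_i$ has split multiplicative reduction at $\ell_i$ with Tate parameter of valuation $1$.

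I then dispatch each candidate $N \neq \SL_2(\ZZ/4\ZZ)$ by exhibiting ramification of $\ell_1$ in the corresponding fixed field $F = K_{i,4}^N$, contradicting $F \subseteq K_{2,4}$. When $N = \{I\}$, $F = K_{i,4}$, so $K_{1,4} = K_{2,4}$ is immediately absurd. When $N = \{\pm I\}$, the inertia at $\ell_1$ in $K_{1,4}/\QQ$ has order at least $4$ and cannot be absorbed by the order-$2$ center, so $\ell_1$ still ramifies in $K_{1,4}^{\{\pm I\}}$. When $N = K_{\mathrm{red}}$, the fixed field is $K_{i,2} \cdot \QQ(\zeta_4)$, in which $\ell_1$ ramifies via $K_{1,2}$. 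Finally, when $N = [\GL_2,\GL_2]$, the fixed field is the maximal abelian subextension of $K_{i,4}/\QQ$; computing $\GL_2(\ZZ/4\ZZ)^{\mathrm{ab}} \simeq (\ZZ/2\ZZ)^2$ via the determinant and the sign of the reduction mod $2$ identifies this subfield precisely as $\QQ(\zeta_4,\sqrt{\Delta_i})$, so the equality $K_{1,4}^N = K_{2,4}^N$ would force $\QQ(\sqrt{\Delta_1}) = \QQ(\sqrt{\Delta_2})$, which fails because $\ell_1$ divides $\Delta_1$ to odd order but does not divide $\Delta_2$. I expect this last biquadratic case to be the main obstacle, since it requires the precise identification of the maximal abelian subfield of $K_{i,4}$ together with careful discriminant bookkeeping.
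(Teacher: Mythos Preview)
Your argument is correct but takes a genuinely different route from the paper. The paper appeals directly to Adelmann's description of the subfield lattice of $K_{i,4}$: every subfield properly containing $\QQ(\zeta_4)$ already contains $\QQ(\zeta_4,\sqrt{\Delta_i})$, so if $F$ were strictly larger than $\QQ(\zeta_4)$ then $\QQ(\sqrt{\Delta_1})$ would have to coincide with one of the three quadratic subfields $\QQ(\zeta_4)$, $\QQ(\sqrt{\Delta_2})$, $\QQ(\sqrt{-\Delta_2})$ of $K_{2,4}$, which is ruled out by $\gcd(\Delta_1,\Delta_2)=1$. In group-theoretic terms this lattice fact says every proper normal subgroup of $\GL_2(\ZZ/4\ZZ)$ inside $\SL_2(\ZZ/4\ZZ)$ is already contained in $[\GL_2,\GL_2]$, so the paper collapses your cases $N\in\{\{I\},\{\pm I\},K_{\mathrm{red}}\}$ into the single case $N=[\GL_2,\GL_2]$ and finishes with one quadratic-field comparison. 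Your approach instead classifies the normal subgroups explicitly and eliminates each via ramification at $\ell_1$, closely paralleling the paper's own strategy for $p\geq 5$; this is more self-contained (it does not rely on Adelmann's subfield diagrams) at the cost of more casework. One small imprecision worth tightening: in your final case, equality of $\QQ(\zeta_4,\sqrt{\Delta_1})$ and $\QQ(\zeta_4,\sqrt{\Delta_2})$ only forces $\QQ(\sqrt{\Delta_1})\in\{\QQ(\zeta_4),\QQ(\sqrt{\Delta_2}),\QQ(\sqrt{-\Delta_2})\}$, not literally $\QQ(\sqrt{\Delta_1})=\QQ(\sqrt{\Delta_2})$; however, your stated reason (the odd prime $\ell_1$ divides $\Delta_1$ to odd order and does not divide $\Delta_2$) already rules out all three possibilities simultaneously.
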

\begin{proof}
The subfield structure of $4$-division fields of elliptic curves is explained in detail in \cite[Chapter 5.5]{Adelmann}. In particular, every subfield of $K_{i,4}$ which properly contains $\QQ(\zeta_4)$ also contains $\QQ(\zeta_4,\sqrt{\Delta_i})$, as well as {\em all} subfields which are quadratic over $\QQ$ . (See Figure \ref{diagram-4-division-field}.) Let $F=K_{1,4} \cap K_{2,4}$, so we have a containment $\QQ(\zeta_4) \subset F$. By the subfield diagram, if $[F:\QQ(\zeta_4)]>1$ then we must also have $\QQ(\zeta_4,\sqrt{\Delta_1}) \subset F \subset K_{2,4}$. But then this implies that the quadratic field $\QQ(\sqrt{\Delta_1})$ is also contained in $F \subset K_{2,4}$. However, the only quadratic subfields of $K_{2,4}$ are 
\[
\QQ(\zeta_4), \QQ(\sqrt{\Delta_2}), \mathrm{\text{ and  }} \QQ(\sqrt{-\Delta_2}).
\]
Since $\ell_1$ is an odd prime and $\gcd(\Delta_1,\Delta_2)=1$, we cannot have equality between $\QQ(\sqrt{\Delta_1})$ and any of the aforementioned fields. So we must have $[F:\QQ(\zeta_4)]=1$ which proves the lemma.  

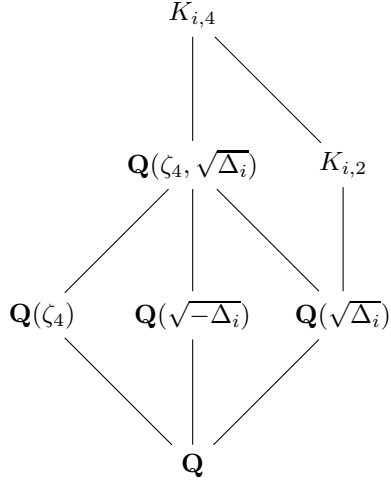
\begin{figure}
\centering
 \begin{tikzpicture}[node distance = 2cm, auto]
      \node (Q)       {$\QQ$};
      \node (Qzeta) [above of=Q, left of=Q] {$\QQ(\zeta_4)$};
      \node (Q-D) [above of=Q] {$\QQ(\sqrt{-\Delta_i})$};
      \node (QD) [above of=Q, right of=Q] {$\QQ(\sqrt{\Delta_i})$};
      \node (F)  [above of=Q-D] {$\QQ(\zeta_4, \sqrt{\Delta_i})$};
      \node (KE2) [above of=QD] {$K_{i,2}$};
      \node (K)  [above of=F] {$K_{i,4}$};
      \draw[-] (Q) to node {} (Qzeta);
      \draw[-] (Q) to node {} (QD);
      \draw[-] (Q) to node {} (Q-D);
      \draw[-] (Qzeta) to node {} (F);
	  \draw[-] (QD) to node {} (F);
	  \draw[-] (Q-D) to node {} (F);
	  \draw[-] (F) to node {} (K);
	  \draw[-] (QD) to node {} (KE2);
	  \draw[-] (KE2) to node {} (K);   
      \end{tikzpicture}
      \caption{A portion of the subfield diagram of $K_{i,4}$ from \cite[Figure 5.7]{Adelmann}}\label{diagram-4-division-field}
\end{figure}
\end{proof}

The argument for the case of $9$-division fields is very similar to that of Lemma \ref{lemma-4-division}. First we recall a result about the structure of the $3$-Division fields of elliptic curves.

\begin{lem}\label{lem-3-division}
Let $M=\QQ(x(E_i[3]))$ denote the number field obtained by adjoining to $\QQ$ the $x$-coordinates of the $3$-torsion points of $E_i$. Then $\QQ(\sqrt[3]{\Delta_i},\zeta_3)$ is the unique subfield of $M$ which has degree $6$ over $\QQ$. The only other subfield of $K_{i,9}$ which has degree $6$ over $\QQ$ is $\QQ(\zeta_9)$.
\end{lem}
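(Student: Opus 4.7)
Since $E_i$ is a Serre curve, $\mathrm{Gal}(K_{i,3}/\QQ) \cong \GL_2(\ZZ/3\ZZ)$, and the central subgroup $\{\pm I\}$ acts trivially on $x$-coordinates, so $M = K_{i,3}^{\{\pm I\}}$ is Galois over $\QQ$ with Galois group $\PGL_2(\ZZ/3\ZZ) \cong S_4$. I would interpret the first assertion of the lemma as a statement about Galois subfields: such subfields of $M$ of degree $6$ correspond to normal order-$4$ subgroups of $S_4$, and the only one is the Klein four-group $V$, giving a unique subfield $F$ with $\mathrm{Gal}(F/\QQ) \cong S_3$. (Equivalently, $V$ is the unique order-$4$ subgroup of $S_4$ contained in $A_4$, so $F$ is also the unique degree-$6$ subfield of $M$ containing $\QQ(\zeta_3)$.)

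To identify $F$ with $\QQ(\sqrt[3]{\Delta_i},\zeta_3)$, I would pass to a short Weierstrass model of $E_i$ and consider the $3$-division polynomial $\psi_3(x)$, a quartic whose splitting field is $M$. By the classical theory of quartics, $F$ is the splitting field of the resolvent cubic of $\psi_3$, and a direct computation (or appeal to the corresponding formula in \cite[Chapter 5.6]{Adelmann}) shows that for an elliptic curve with discriminant $\Delta$, this resolvent cubic has splitting field $\QQ(\sqrt[3]{\Delta},\zeta_3)$.

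For the second assertion, Galois subfields of $K_{i,9}$ of degree $6$ correspond to normal subgroups $N \triangleleft \GL_2(\ZZ/9\ZZ)$ of index $6$, which I would analyze via the reduction-mod-$3$ exact sequence
\[
1 \longrightarrow \mathcal{K} \longrightarrow \GL_2(\ZZ/9\ZZ) \longrightarrow \GL_2(\ZZ/3\ZZ) \longrightarrow 1,
\]
where $\mathcal{K} \cong M_2(\ZZ/3\ZZ)$ (as an abelian group) has order $81$. If $\mathcal{K} \subseteq N$, then $N/\mathcal{K}$ is a normal subgroup of $\GL_2(\ZZ/3\ZZ)$ of index $6$; the only option is the preimage $Q_8$ of $V$, which recovers $F$. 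Otherwise $\mathcal{K} \not\subseteq N$, and since $\gcd(|\mathcal{K}|,6)=3$, the image of $\mathcal{K}$ in $\GL_2(\ZZ/9\ZZ)/N$ is a normal subgroup of order $3$, forcing $\GL_2(\ZZ/9\ZZ)/N$ to be isomorphic to $\ZZ/6\ZZ$ or $S_3$.

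The main obstacle is ruling out the $S_3$ possibility in this second case, which would produce a degree-$6$ $S_3$-extension of $\QQ$ inside $K_{i,9}$ distinct from both $F$ and $\QQ(\zeta_9)$. I would argue as follows: any such surjection $\phi: \GL_2(\ZZ/9\ZZ) \twoheadrightarrow S_3$ induces a $\GL_2(\ZZ/3\ZZ)$-equivariant surjection $\mathcal{K} \twoheadrightarrow \ZZ/3\ZZ$, where $\mathcal{K} \cong M_2(\ZZ/3\ZZ)$ carries the conjugation action and the target $\ZZ/3\ZZ$ is acted on via the character $\det: \GL_2(\ZZ/3\ZZ) \to (\ZZ/3\ZZ)^\times = \mathrm{Aut}(\ZZ/3\ZZ)$, reflecting the inversion action of $S_3/\phi(\mathcal{K}) \cong \ZZ/2\ZZ$ on $\phi(\mathcal{K})$. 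A direct calculation with the standard representation shows that the only $\GL_2(\ZZ/3\ZZ)$-equivariant $\ZZ/3\ZZ$-valued linear functional on $M_2(\ZZ/3\ZZ)$ is, up to scalar, the trace, whose equivariance is trivial rather than $\det$-twisted. Hence no such $S_3$-quotient exists, leaving only the abelian quotient $\GL_2(\ZZ/9\ZZ)/N \cong \ZZ/6\ZZ$ (coming from the trace, i.e., from the determinant on $\GL_2(\ZZ/9\ZZ)$), which forces $N = \SL_2(\ZZ/9\ZZ)$ with fixed field $\QQ(\zeta_9)$.
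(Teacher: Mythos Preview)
Your argument is correct and substantially more detailed than the paper's, which simply cites \cite[Proposition~5.4.3 and Figure~5.4]{Adelmann} for both assertions without further comment. Where the paper defers entirely to Adelmann's tables, you supply a self-contained group-theoretic proof: the first assertion via the normal-subgroup structure of $S_4\cong\PGL_2(\ZZ/3\ZZ)$ together with the resolvent cubic of the $3$-division polynomial, and the second via a classification of index-$6$ normal subgroups of $\GL_2(\ZZ/9\ZZ)$ through the congruence filtration. Your reading of the lemma as a statement about \emph{Galois} subfields is both necessary (since $S_4$ has seven subgroups of order~$4$, only one normal) and consistent with how the lemma is actually applied in Lemma~\ref{lemma-9-division}. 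The representation-theoretic step ruling out an $S_3$-quotient---observing that any $\SL_2(\FF_3)$-invariant functional on $M_2(\FF_3)$ is a multiple of the trace, which has trivial rather than $\det$-twisted equivariance---is correct and rather elegant. One small point you use implicitly at the end is that $[\GL_2(\ZZ/9\ZZ),\GL_2(\ZZ/9\ZZ)]=\SL_2(\ZZ/9\ZZ)$ (equivalently, the abelianization is exactly $(\ZZ/9\ZZ)^\times\cong\ZZ/6\ZZ$), so that the cyclic order-$6$ quotient is unique and $N=\SL_2(\ZZ/9\ZZ)$ is indeed forced; this is standard but worth stating.
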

\begin{proof}
The first statement is \cite[Proposition 5.4.3]{Adelmann}. The second statement is visible in \cite[Figure 5.4]{Adelmann}.
\end{proof}

\begin{lem}\label{lemma-9-division}
For our pair $(E_1,E_2)$, we have $K_{1,9} \cap K_{2,9} = \QQ(\zeta_9)$.
\end{lem}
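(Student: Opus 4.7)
The plan is to follow the strategy of Lemma~\ref{lemma-4-division}, with Lemma~\ref{lem-3-division} providing the structural input about $K_{i,9}$. Setting $F = K_{1,9} \cap K_{2,9}$ and $G = \SL_2(\ZZ/9\ZZ)$, the Weil pairing gives $\QQ(\zeta_9) \subseteq F$, and by Goursat's lemma applied to $\Gal(K_{1,9} K_{2,9}/\QQ(\zeta_9)) \hookrightarrow G \times G$, $F$ corresponds to a common normal subgroup $N \triangleleft G$ with $\Gal(F/\QQ(\zeta_9)) \cong G/N$. It suffices to show $N = G$, so assume for contradiction that $N$ is proper.

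The key structural step is to verify that every proper normal subgroup of $G$ is contained in $[G, G]$. Let $\pi \colon G \twoheadrightarrow \SL_2(\FF_3)$ be reduction mod $3$ with kernel $K$, so $K$ is isomorphic to $\mathfrak{sl}_2(\FF_3)$ as an $\SL_2(\FF_3)$-module under the adjoint action, and $[G,G] = \pi^{-1}(Q_8)$ where $Q_8 = [\SL_2(\FF_3), \SL_2(\FF_3)]$. The image $\pi(N) \triangleleft \SL_2(\FF_3)$ is one of $\{I\}, \{\pm I\}, Q_8, \SL_2(\FF_3)$; if $\pi(N) \subseteq Q_8$ we are done, so suppose $\pi(N) = \SL_2(\FF_3)$, i.e., $NK = G$. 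Then $N \cap K$ is a $G$-submodule of the irreducible $K$, hence either $\{I\}$ or $K$. The case $N \cap K = K$ gives $N = G$, contradicting properness; and $N \cap K = \{I\}$ would yield $G = K \times N$ as an internal direct product, so $N \subseteq C_G(K) = K \cdot \{\pm I\}$ (which has order $54$), incompatible with $|N| = 24$.

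Now $N \subseteq [G, G]$, so $F$ contains the fixed field $K_{i,9}^{[G,G]}$ for each $i$. Using Lemma~\ref{lem-3-division}, $\QQ(\sqrt[3]{\Delta_i}, \zeta_3) \subseteq K_{i,3} \subseteq K_{i,9}$, so $\QQ(\zeta_9, \sqrt[3]{\Delta_i})$ is a degree-$3$ Kummer extension of $\QQ(\zeta_9)$ inside $K_{i,9}$; the uniqueness of the index-$3$ normal subgroup $[G, G]$ in $G$ (forced by $G^{\mathrm{ab}} \cong \ZZ/3\ZZ$) identifies it as $K_{i,9}^{[G,G]}$. Hence $\QQ(\sqrt[3]{\Delta_2}) \subseteq F \subseteq K_{1,9}$.

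For the contradiction, choose a prime $p \mid \Delta_2$ with $p \neq 3$ (take $p = \ell_2$ if $\ell_2 \neq 3$; otherwise any prime factor of $1297 = 432 \cdot 3 + 1$, which is in fact prime). Since $v_p(\Delta_2) = 1$, $p$ ramifies in $\QQ(\sqrt[3]{\Delta_2})$; but $\gcd(\Delta_1, \Delta_2) = 1$ gives $p \nmid \Delta_1$, so $E_1$ has good reduction at $p$, and since $p \neq 3$, Proposition~\ref{prop-Neron-Ogg-Sha} says $p$ is unramified in $K_{1,9}/\QQ$, contradicting $\QQ(\sqrt[3]{\Delta_2}) \subseteq K_{1,9}$. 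The main obstacle is the normal-subgroup classification in the second paragraph: whereas Proposition~\ref{prop-p-atleast-5} leveraged the simplicity of $\PSL_2(\FF_p)$ for $p \geq 5$, at $p = 3$ one needs the irreducibility of the adjoint representation on $K$ together with the direct-product obstruction to rule out exotic proper normal subgroups.
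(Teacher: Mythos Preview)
Your proof is correct but takes a different route from the paper. The paper's argument is much shorter: it cites Adelmann's explicit subfield lattice for $K_{i,9}$ (\cite[Figure~5.4]{Adelmann}) to assert directly that any subfield of $K_{i,9}$ strictly containing $\QQ(\zeta_9)$ must contain $\QQ(\zeta_3, x(E_i[3]))$. Combined with Lemma~\ref{lem-3-division}, this forces $\QQ(\sqrt[3]{\Delta_1},\zeta_3) = \QQ(\sqrt[3]{\Delta_2},\zeta_3)$, which is ruled out by $\gcd(\Delta_1,\Delta_2)=1$ via (implicit) Kummer theory. You instead develop the normal-subgroup structure of $G=\SL_2(\ZZ/9\ZZ)$ from scratch, proving that every proper normal subgroup lies in $[G,G]$ and thereby identifying $K_{i,9}^{[G,G]}=\QQ(\zeta_9,\sqrt[3]{\Delta_i})$; you then finish with a ramification argument via Proposition~\ref{prop-Neron-Ogg-Sha} rather than Kummer theory. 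Your approach is more self-contained on the group-theoretic side and parallels the ramification strategy already used in Proposition~\ref{prop-p-atleast-5}; the paper's approach is shorter but leans entirely on the external subfield classification.

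One small imprecision worth tightening: Goursat's lemma in general yields two normal subgroups $N_1,N_2\triangleleft G$ with $G/N_1\simeq G/N_2$, not a single common $N$. This does not affect your argument, since each $N_i$ is proper and hence contained in $[G,G]$, giving $F\supseteq K_{i,9}^{[G,G]}$ for both $i$; but the phrasing ``a common normal subgroup $N$'' should be adjusted accordingly.
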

\begin{proof}
The subfield structure of $9$-Division fields of elliptic curves is also explained in detail in \cite[Chapter 5.2]{Adelmann}. In particular, by \cite[Figure 5.4]{Adelmann} every subfield of $K_{i,9}$ which properly contains $\QQ(\zeta_9)$ also contains $\QQ(\zeta_3,x(E_i[3]))$

Let $F=K_{1,9} \cap K_{2,9}$, so we have a containment $\QQ(\zeta_9) \subset F$. If $[F:\QQ(\zeta_9)]>1$ then we must also have $\QQ(\zeta_3,x(E_1[3])) \subset F \subset K_{2,9}$. But then Lemma \ref{lem-3-division} implies that
\[
\QQ(\sqrt[3]{\Delta_1},\zeta_3) = \QQ(\sqrt[3]{\Delta_2},\zeta_3) 
\]
which is impossible since $\gcd(\Delta_1,\Delta_2)=1$. Thus $[F:\QQ(\zeta_9)]=1$ which proves the lemma.

\end{proof}

It remains to consider the possible entanglement between the $4$- and $9$-Division fields of our elliptic curves. By symmetry it suffices to show the following.

\begin{lem}\label{lemma-mixed-division}
For our pair $(E_1,E_2)$, we have $K_{1,4} \cap K_{2,9} = \QQ$.
\end{lem}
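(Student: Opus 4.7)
The plan is to show $F := K_{1,4} \cap K_{2,9} = \QQ$ in two stages: first establish that the maximal abelian subextension of the Galois extension $F/\QQ$ is trivial, then deduce via a group-theoretic argument that $F/\QQ$ itself is trivial.

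For the first stage, any abelian subextension of $F/\QQ$ is contained in the maximal abelian subextension of $K_{1,4}/\QQ$. Since $E_1$ is a Serre curve, the subfield structure from Figure \ref{diagram-4-division-field} together with the fact that $\GL_2(\ZZ/4\ZZ)$ has abelianization of order $4$ (arising from the two distinct characters $\det$ and the composition of reduction mod $2$ with the sign character on $S_3$) identifies this maximal abelian subextension as the biquadratic field $\QQ(\zeta_4, \sqrt{\Delta_1})$. Its nontrivial subfields are $\QQ(\zeta_4)$, $\QQ(\sqrt{\Delta_1})$, $\QQ(\sqrt{-\Delta_1})$, and the biquadratic itself. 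I would rule out each as a subfield of $F$ by exhibiting a rational prime that ramifies in the subfield but not in $K_{2,9}$. For $\QQ(\zeta_4)$ this prime is $2$: since $\Delta_2$ is odd, $E_2$ has good reduction at $2$, and Proposition \ref{prop-Neron-Ogg-Sha} with $n = 9$ implies that $2$ is unramified in $K_{2,9}$. For $\QQ(\sqrt{\pm \Delta_1})$, I would choose any odd prime $p$ appearing in the squarefree part of $\Delta_1 = -\ell_1(432\ell_1 + 1)$; by the hypothesis $\gcd(\Delta_1, \Delta_2) = 1$, such $p$ does not divide $\Delta_2$, so $E_2$ has good reduction at $p$ and Proposition \ref{prop-Neron-Ogg-Sha} again gives that $p$ is unramified in $K_{2,9}$. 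The biquadratic itself is excluded because it contains $\QQ(\zeta_4)$. It follows that the maximal abelian subextension of $F/\QQ$ is $\QQ$, i.e., $\Gal(F/\QQ)$ is a perfect group.

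For the second stage, $\Gal(F/\QQ)$ is a perfect quotient of $\Gal(K_{1,4}/\QQ) \simeq \GL_2(\ZZ/4\ZZ)$, a group of order $96$. Since the determinant gives a surjection onto $(\ZZ/4\ZZ)^\times \simeq \ZZ/2\ZZ$, the commutator subgroup of $\GL_2(\ZZ/4\ZZ)$ has order at most $48$. Any perfect quotient $Q$ coincides with the image of this commutator subgroup in $Q$, so $|Q|$ divides $48$; since the smallest nontrivial perfect group is $A_5$ of order $60$, we conclude that $Q$ is trivial, and therefore $F = \QQ$.

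The main obstacle lies in the ramification bookkeeping when $\ell_1 = 3$, since in that case $3$ ramifies in both $K_{1,4}$ and $K_{2,9}$ and cannot serve as a separating prime. Here the coprimality hypothesis $\gcd(\Delta_1, \Delta_2) = 1$ is critical: it guarantees that the remaining prime divisor of $\Delta_1 = -3 \cdot 1297$, namely $1297$, does not ramify in $K_{2,9}$, providing the needed separation for the quadratic subfields $\QQ(\sqrt{\pm \Delta_1})$.
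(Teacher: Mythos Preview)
Your proof is correct and takes a genuinely different route from the paper's. The paper works from the $K_{2,9}$ side: it cites Adelmann to assert that every nontrivial Galois subextension of $K_{2,9}/\QQ$ contains $\QQ(\zeta_3)$, and then simply checks that $\QQ(\zeta_3)$ is not among the three quadratic subfields $\QQ(\zeta_4),\QQ(\sqrt{\pm\Delta_1})$ of $K_{1,4}$. You instead work from the $K_{1,4}$ side: you use ramification (via N\'eron--Ogg--Shafarevich and the coprimality hypothesis) to exclude each quadratic subfield of $K_{1,4}$ from $K_{2,9}$, deduce that $\Gal(F/\QQ)$ is perfect, and then invoke the elementary observation that $\GL_2(\ZZ/4\ZZ)$ has no nontrivial perfect quotient because its commutator subgroup has order at most $48<60=|A_5|$. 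The paper's argument is shorter but leans on the full normal-subgroup lattice of $\GL_2(\ZZ/9\ZZ)$ from Adelmann; your argument is more self-contained, needing only the quadratic subfields of $K_{1,4}$ (already displayed in Figure~\ref{diagram-4-division-field}) together with a clean group-theoretic fact. One small point of care: when you invoke Proposition~\ref{prop-Neron-Ogg-Sha} for the separating prime $p$ in the squarefree part of $\Delta_1$, you also need $p\neq 3$, not just $p\nmid\Delta_2$; you handle this in your final paragraph for $\ell_1=3$, and when $\ell_1\neq 3$ one checks $3\nmid\Delta_1$ since $432\ell_1+1\equiv 1\pmod 3$, so the issue does not arise---but it would be cleaner to state this explicitly where Proposition~\ref{prop-Neron-Ogg-Sha} is applied.
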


\begin{proof}

By \cite[Figure 5.4]{Adelmann}, every subextension of $K_{2,9}$ which is Galois over $\QQ$ contains $\QQ(\zeta_3)$ as the unique subextension which is quadratic over $\QQ$. Therefore, if $F= K_{1,4} \cap K_{2,9}$ satisfies $[F:\QQ]>1$, then $\QQ(\zeta_3) \subset F$. But also $F \subset K_{1,4}$, and as shown in Figure \ref{diagram-4-division-field}, the only quadratic subextensions of $K_{1,4}$ are
\[
\QQ(\zeta_4) , \QQ(\sqrt{-\Delta_1}), \mathrm{and}\ \QQ(\sqrt{\Delta_1}).
\]
One checks that if $\ell_1 = 3$ then $\Delta_1 = 3 \cdot 1297$; otherwise, $\ell_1 > 3$ and $\nu_{\ell_1}(\Delta_1)=1$, so in any case none of these extensions is equal to $\QQ(\zeta_3)$. It follows that $[F:\QQ]=1$, proving the lemma. 

\end{proof}

We summarize the results of this section.

\begin{prop}\label{36-prop}
For our chosen pair of elliptic curves $(E_1,E_2)$, we have
\[
\Im \bar\rho_{(E_1,E_2),36} = D_{36}
\]
\end{prop}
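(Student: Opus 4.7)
Since the three lemmas of this section have already established the key intersection identities, the proof is an assembly via Goursat's lemma and the Chinese remainder theorem. My plan has three steps.

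First, I would invoke CRT to write $\GL_2(\ZZ/36\ZZ) \simeq \GL_2(\ZZ/4\ZZ) \times \GL_2(\ZZ/9\ZZ)$, under which $D_{36} \simeq D_4 \times D_9$ since the determinant condition decouples over the two components. This reduces the problem to verifying three things: $\Im\bar\rho_{(E_1,E_2),4} = D_4$, $\Im\bar\rho_{(E_1,E_2),9} = D_9$, and that the $4$-part and $9$-part of $\Im\bar\rho_{(E_1,E_2),36}$ sit independently in the product.

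Second, I would run the same Goursat argument as in Section \ref{section-p5andup}. Applied to $\mathrm{Gal}(K_{1,4} K_{2,4}/\QQ) \subseteq \GL_2(\ZZ/4\ZZ)^2$, with both coordinate projections surjective because $E_1$ and $E_2$ are Serre curves, Lemma \ref{lemma-4-division} identifies the common fixed field as $\QQ(\zeta_4)$; since the determinant character cuts out exactly $\QQ(\zeta_4)$, this pins down the image as $D_4$. The same reasoning with Lemma \ref{lemma-9-division} gives $\Im\bar\rho_{(E_1,E_2),9} = D_9$.

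Third, to conclude $\Im\bar\rho_{(E_1,E_2),36} = D_{36}$, I must rule out entanglement between the $4$- and $9$-parts of the compositum $L = L_4 L_9$, where $L_4 = K_{1,4} K_{2,4}$ and $L_9 = K_{1,9} K_{2,9}$. Applying Goursat once more to $\mathrm{Gal}(L/\QQ) \subseteq D_4 \times D_9$, this reduces to showing $L_4 \cap L_9 = \QQ$. The four pairwise intersections $K_{i,4} \cap K_{j,9}$ are all trivial: for $i=j$ by CRT since $E_i$ is a Serre curve (so $\mathrm{Gal}(K_{i,36}/\QQ) \simeq \GL_2(\ZZ/4\ZZ) \times \GL_2(\ZZ/9\ZZ)$ already splits), and for $i \neq j$ by Lemma \ref{lemma-mixed-division}. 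The main obstacle is arguing that these pairwise-trivial intersections actually force $L_4 \cap L_9 = \QQ$; this should follow from the fact that $\mathrm{Gal}(L_4/\QQ)$ and $\mathrm{Gal}(L_9/\QQ)$ have been completely pinned down in step two, which tightly constrains the possible common quotients and hence the possible subfield structure of the compositum.
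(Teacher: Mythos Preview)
Your proposal is correct and follows essentially the same route as the paper: the paper's one-line proof simply cites Lemmas \ref{lemma-4-division}, \ref{lemma-9-division}, and \ref{lemma-mixed-division}, having already asserted in the preamble to Section \ref{Section-p23} the CRT/Goursat reduction that you spell out explicitly in your three steps.

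One small correction to your third step: the resolution of $L_4\cap L_9=\QQ$ does \emph{not} follow merely from step two having pinned down $\mathrm{Gal}(L_4/\QQ)\simeq D_4$ and $\mathrm{Gal}(L_9/\QQ)\simeq D_9$ as abstract groups, since these groups do share nontrivial common quotients (for instance both surject onto $\ZZ/2\ZZ$). What actually forces $L_4\cap L_9=\QQ$ is the combination of the four cross-intersections $K_{i,4}\cap K_{j,9}=\QQ$ that you already listed (two from Lemma \ref{lemma-mixed-division}, two from the Serre-curve splitting $K_{i,4}\cap K_{i,9}=\QQ$): these say that $H=\Im\bar\rho_{(E_1,E_2),36}$ surjects onto each of the four products $\GL_2(\ZZ/4\ZZ)_i\times\GL_2(\ZZ/9\ZZ)_j$, which together with the Goursat description $H=\{(d_4,d_9):\phi(d_4)=\psi(d_9)\}$ force the common quotient to be trivial. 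The paper glosses over this point as well, so your plan is at least as rigorous as the original.
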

\begin{proof}
This follows immediately Lemma \ref{lemma-4-division}, Lemma \ref{lemma-9-division}, and Lemma \ref{lemma-mixed-division}.
\end{proof}

We can now prove the main result of this paper.

\begin{thm}\label{thm-main-body}
Let $\ell_1$ and $\ell_2$ be odd primes not equal to $7$ such that $\gcd(432\ell_1^2+\ell_1, 432\ell_2^2+\ell_2) = 1$, and for $i=1,2$ let 
\[ 
E_{\ell_i}: y^2+xy = x^3+\ell_i.
\]
Then the pair $(E_{\ell_1},E_{\ell_2})$ is a Serre pair.
\end{thm}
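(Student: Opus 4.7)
The plan is to deduce the theorem directly from the machinery assembled in the preceding two sections by verifying the two hypotheses of Lemma \ref{lem-2_conditions}. Since $E_{\ell_1}$ and $E_{\ell_2}$ are Serre curves by Lemma \ref{lem-ex_serre_curves}, the only remaining task is to check that the image of $\bar\rho_{(E_1,E_2),n}$ fills out $D_n$ for $n=p$ (each prime $p\geq 5$) and for $n=36$.

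First, I would invoke Proposition \ref{prop-p-atleast-5}, which asserts precisely that $\Im\bar\rho_{(E_1,E_2),p} = D_p$ for every prime $p\geq 5$. This handles condition (1) of Lemma \ref{lem-2_conditions} uniformly in $p$, and nothing further is needed because that proposition was already proved under exactly the hypothesis $\gcd(\Delta(E_1),\Delta(E_2))=1$ imposed in the statement of the theorem. Next, I would invoke Proposition \ref{36-prop}, which gives $\Im\bar\rho_{(E_1,E_2),36} = D_{36}$; this is condition (2) of Lemma \ref{lem-2_conditions} and was itself assembled from the three entanglement lemmas (Lemmas \ref{lemma-4-division}, \ref{lemma-9-division}, and \ref{lemma-mixed-division}) together with Goursat's lemma applied to the decomposition $\GL_2(\ZZ/36\ZZ)\simeq \GL_2(\ZZ/4\ZZ)\times \GL_2(\ZZ/9\ZZ)$.

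With both conditions of Lemma \ref{lem-2_conditions} established, the lemma immediately gives $[D : \Im\rho_{(E_1,E_2)}] = 4$, which by definition means that $(E_{\ell_1}, E_{\ell_2})$ is a Serre pair. There is no real obstacle at this stage, since every nontrivial piece of the argument has already been carried out; the proof is simply an assembly step. The one thing to be careful about is to note explicitly that the hypotheses of the theorem match those fixed in the notation at the end of Section 1 (two odd primes, both different from $7$, with coprime discriminants), so that the results of Sections 2 and 3 apply verbatim to the pair $(E_{\ell_1},E_{\ell_2})$ in the statement.
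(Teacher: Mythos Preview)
Your proposal is correct and matches the paper's own proof essentially verbatim: the paper simply writes that the theorem follows immediately from Lemma~\ref{lem-2_conditions}, Proposition~\ref{prop-p-atleast-5}, and Proposition~\ref{36-prop}. Your added remarks about why the standing hypotheses of Sections~\ref{section-p5andup} and~\ref{Section-p23} apply are appropriate and do not deviate from the intended argument.
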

\begin{proof}
This follows immediately from Lemma \ref{lem-2_conditions}, Proposition \ref{prop-p-atleast-5}, and Proposition \ref{36-prop}.
\end{proof}

\section{Serre $k$-tuples}

Given a $k$-tuple of elliptic curves $(E_1,\ldots,E_k)$, one can generalize the above construction in the obvious way to obtain a representation
\[
\rho_{(E_1,\ldots,E_k)} : G_\QQ \to \big(\GL_2(\hat{\ZZ}) \big)^k,
\]
whose image is contained in 
\[
D^{(k)} := \left\{(A_1,A_2,\dots,A_k)\in \big(\GL_2(\hat{\ZZ}) \big)^k: \det A_1 = \det A_2 = \dots = \det A_k \right\}.
\]
Unsurprisingly, one has
\[
\left[D^{(k)} : \Im\rho_{(E_1,\ldots,E_k)}\right] \geq 2^k.
\]

\begin{dfn}
For any integer $k \geq 1$, a $k$-tuple $(E_1,\ldots, E_k)$ of elliptic curves is called a \emph{Serre $k$-tuple} if $[D^{(k)}: \Im\rho_{(E_1,\ldots,E_k)}] = 2^k$.
\end{dfn}

In \cite[Theorem 4.3]{Jones2013}, it is shown that almost all $k$-tuples of elliptic curves are Serre $k$-tuples. Theorem $\ref{thm-main-body}$ easily generalizes to the case $k \geq 2$.

\begin{thm}\label{thm-k-tuple}
Let $\ell_1,\ldots,\ell_k$ be odd primes not equal to $7$ such that $\gcd(432\ell_i^2 + \ell_i,432\ell_j^2 + \ell_j)=1$ for each pair $1 \leq i < j \leq k$. For each $1 \leq i \leq k$ let
\[ 
E_{\ell_i}: y^2+xy = x^3+\ell_i.
\]
Then $(E_{\ell_1},\ldots,E_{\ell_k})$ is a Serre $k$-tuple.
\end{thm}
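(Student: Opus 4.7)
My plan is to argue by induction on $k$, with the base case $k=2$ being Theorem \ref{thm-main-body}. Before starting the induction, I would record a $k$-tuple analogue of Lemma \ref{lem-2_conditions}: namely, that $(E_{\ell_1},\ldots,E_{\ell_k})$ is a Serre $k$-tuple provided $\Im\bar\rho_{(E_1,\ldots,E_k),p}=D_p^{(k)}$ for every prime $p\geq 5$ and $\Im\bar\rho_{(E_1,\ldots,E_k),36}=D_{36}^{(k)}$. This should follow from the same group-theoretic argument used in \cite[Lemma 3.1]{Jones2013}, applied to $k$ factors of $\GL_2$ rather than two.

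Granting this reduction, for a prime $p\geq 5$ I would set $L=K_{1,p}\cdots K_{k-1,p}$ and note that, by the inductive hypothesis together with Goursat's lemma applied iteratively, $\Gal(L/\QQ(\zeta_p))\cong \SL_2(\ZZ/p\ZZ)^{k-1}$. Another application of Goursat reduces the claim $\Gal(L\cdot K_{k,p}/\QQ(\zeta_p))\cong \SL_2(\ZZ/p\ZZ)^k$ to the assertion $L\cap K_{k,p}=\QQ(\zeta_p)$. Since the $\ell_i$ are distinct primes, we may assume $\ell_k\neq p$, so the intersection $M=L\cap K_{k,p}$ is a Galois extension of $\QQ(\zeta_p)$ inside $K_{k,p}$, corresponding to a normal subgroup of $\SL_2(\ZZ/p\ZZ)$, which is one of $\{I\}$, $\{\pm I\}$, $\SL_2(\ZZ/p\ZZ)$. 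The same ramification argument used in Proposition \ref{prop-p-atleast-5} applies: by the gcd hypothesis and Proposition \ref{prop-Neron-Ogg-Sha}, $\ell_k$ is unramified in every $K_{i,p}$ for $i<k$, hence in $L$, hence in $M$, while Lemma \ref{lem-ramification} forces ramification index at least $p$ in $K_{k,p}/\QQ$; this rules out both $M=K_{k,p}$ and $[K_{k,p}:M]=2$, leaving $M=\QQ(\zeta_p)$.

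For the level $n=36$, I would exploit the decomposition $K_{i,36}=K_{i,4}\cdot K_{i,9}$ together with the isomorphism $\GL_2(\ZZ/36\ZZ)\cong\GL_2(\ZZ/4\ZZ)\times\GL_2(\ZZ/9\ZZ)$, which reduces the desired equality $\Im\bar\rho_{(E_1,\ldots,E_k),36}=D_{36}^{(k)}$ to three separate assertions: $L_{k-1,4}\cap K_{k,4}=\QQ(\zeta_4)$, $L_{k-1,9}\cap K_{k,9}=\QQ(\zeta_9)$, and $(L_{k-1,4}\cdot K_{k,4})\cap(L_{k-1,9}\cdot K_{k,9})=\QQ$, where $L_{k-1,n}$ denotes the compositum of the $K_{i,n}$ for $i<k$. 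The first two I would handle by inductive arguments modeled on Lemmas \ref{lemma-4-division} and \ref{lemma-9-division}: any proper intermediate field must contain the quadratic subfield $\QQ(\zeta_4,\sqrt{\Delta_k})$ (respectively the degree-$6$ subfield $\QQ(\sqrt[3]{\Delta_k},\zeta_3)$), which cannot embed into $L_{k-1,4}$ (resp.\ $L_{k-1,9}$) because the gcd hypothesis ensures $\sqrt{\Delta_k}$ (resp.\ $\sqrt[3]{\Delta_k}$) is not obtainable from the analogous invariants of $E_{\ell_1},\ldots,E_{\ell_{k-1}}$. The third assertion follows as in Lemma \ref{lemma-mixed-division} by tracking that any common subfield must be contained in the intersection of quadratic subfields of $L_{k-1,4}\cdot K_{k,4}$ with the forced quadratic subfield $\QQ(\zeta_3)\subset L_{k-1,9}\cdot K_{k,9}$.

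The main obstacle I anticipate is the third step of the $36$-torsion induction: while the pairwise mixed-disjointness statement of Lemma \ref{lemma-mixed-division} is transparent, extending it to the full compositum on both sides requires carefully enumerating the quadratic subfields appearing in $L_{k-1,4}\cdot K_{k,4}$ and ruling out any coincidence with $\QQ(\zeta_3)$ that could arise from combinations of the various $\sqrt{\Delta_i}$. Once this mixed entanglement is controlled, the induction closes and Theorem \ref{thm-k-tuple} follows from the $k$-tuple analogue of Lemma \ref{lem-2_conditions}.
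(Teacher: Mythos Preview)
Your inductive approach is sound in outline, but it takes a genuinely different route from the paper. The paper does not induct at all: it invokes \cite[Corollary 6.7]{JonesPreprint}, which says (for Serre curves) that $(E_1,\ldots,E_k)$ is a Serre $k$-tuple if and only if the division fields are \emph{pairwise} maximally disjoint. Given that reduction, Theorem~\ref{thm-main-body} applied to each pair $(E_{\ell_i},E_{\ell_j})$ finishes the proof in one line. Your approach instead reproves this pairwise-to-$k$-tuple reduction by hand for this particular family, via iterated Goursat and ramification bookkeeping. The advantage of your route is that it is more self-contained (no dependence on the cited corollary of Jones); the cost is exactly the obstacle you flagged. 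For $p\geq 5$ your argument is clean---your list of normal subgroups of $\SL_2(\ZZ/p\ZZ)$ is correct, and the relabeling to ensure $\ell_k\neq p$ is harmless. For $n=36$, however, the inductive step genuinely requires more than Lemmas~\ref{lemma-4-division}--\ref{lemma-mixed-division}: you must show that $\sqrt{\Delta_k}$ (resp.\ $\sqrt[3]{\Delta_k}$) does not lie in the compositum $L_{k-1,4}$ (resp.\ $L_{k-1,9}$), and that $\QQ(\zeta_3)$ is not among the quadratic subfields of $K_{1,4}\cdots K_{k,4}$. These facts do follow from the pairwise coprimality of the $\Delta_i$ together with the observation that each $\Delta_i$ has an odd prime dividing it to an odd power, but the arguments need to be written out and are not immediate restatements of the $k=2$ lemmas. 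If you want to avoid that extra bookkeeping, citing Jones's corollary is the cleaner path.
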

\begin{proof}
Just as in the $k=2$ case, showing that $(E_{\ell_1},\ldots,E_{\ell_k})$ is a Serre $k$-tuple is equivalent to showing that the $E_{\ell_i}$ have maximally disjoint division fields \cite[Corollary 6.7]{JonesPreprint}. Since the discriminants of each elliptic curve in the $k$-tuple are pairwise relatively prime, Theorem \ref{thm-main-body} shows that the division fields for $E_{\ell_1},\ldots,E_{\ell_k}$ are pairwise maximally disjoint, and the result follows.
\end{proof}

\begin{rmk}
The argument in Corollary \ref{corollary-main}, applied inductively, shows that Theorem \ref{thm-k-tuple} produces infinitely many examples of Serre $k$-tuples.
\end{rmk}

\section{Final remarks}

Throughout this paper, we have relied on the elliptic curves
\[
E_i:= y^2 + xy = x^3 + \ell_i
\]
to prove Theorem \ref{thm-main-body}. However, a careful reading of our arguments reveals that only the following facts about the $E_i$ were used:
\begin{itemize}
\item $E_i$ is a Serre curve, and
\item $\Delta_i = \ell_i(432 \ell_i + 1)$
\end{itemize}
It is clearly necessary for the $E_i$ to be Serre curves, while precise knowledge of the discriminant of $E_i$ allowed us to compare the ramification of $\ell_i$ in various division fields. While Theorem \ref{thm-main-body} provides infinitely many explicit examples of Serre $k$-tuples, the arguments in this paper actually prove the following more general statement.

\begin{thm}\label{thm-general}
Let $E_1,\ldots,E_k$ be elliptic curves with discriminants $\Delta_1, \ldots,\Delta_k$, respectively. Suppose that each $E_i$ is a Serre curve, and that for $i=1,\ldots,k$ there exist odd primes $\ell_i > 3$ such that
\begin{itemize}
\item $v_{\ell_i}(\Delta_i) \equiv 1 \mod 2$; 
\item $E_i$ has split multiplicative reduction at $\ell_i$; and
\item $v_{\ell_i}(\Delta_j)=0$ for $i \neq j$.
\end{itemize}
Then $(E_1,\ldots, E_k)$ is a Serre $k$-tuple.
\end{thm}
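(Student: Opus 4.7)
The plan is to follow the same template used to prove Theorems \ref{thm-main-body} and \ref{thm-k-tuple}. By \cite[Corollary 6.7]{JonesPreprint}, a $k$-tuple is Serre precisely when its members have pairwise maximally disjoint division fields, so it suffices to treat one pair $(E_i,E_j)$ with $i\neq j$. For this pair, Lemma \ref{lem-2_conditions} reduces the task to two assertions: (a) $\Im\bar\rho_{(E_i,E_j),p}=D_p$ for all primes $p\geq 5$, and (b) $\Im\bar\rho_{(E_i,E_j),36}=D_{36}$. The plan is then to re-run the arguments of Sections \ref{section-p5andup} and \ref{Section-p23} essentially verbatim, substituting the specific features of $y^2+xy=x^3+\ell_i$ for the corresponding hypotheses of Theorem \ref{thm-general}.

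For (a), the proof of Proposition \ref{prop-p-atleast-5} used three facts about $E_{\ell_i}$: it is a Serre curve (now a hypothesis); $\ell_i$ is unramified in $\QQ(E_j[p])$ (now a consequence of $v_{\ell_i}(\Delta_j)=0$ together with Proposition \ref{prop-Neron-Ogg-Sha}); and the ramification index $e_{\ell_i}$ in $\QQ(E_i[p])/\QQ$ is greater than $2$. For the last item, the hypotheses that $E_i$ has split multiplicative reduction at $\ell_i$ and that $v_{\ell_i}(\Delta_i)$ is odd allow one to invoke the Tate uniformization $E_i(\bar\QQ_{\ell_i})\cong \bar\QQ_{\ell_i}^{\times}/q^{\ZZ}$ with $v_{\ell_i}(q)=v_{\ell_i}(\Delta_i)$, and adjunction of $q^{1/p}$ produces the requisite ramification along the lines of Lemma \ref{lem-ramification}. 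With these three inputs in hand, the Goursat case analysis ruling out $N\in\{\{I\},\{\pm I\},\PSL_2(\ZZ/p\ZZ)\}$ goes through unchanged.

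For (b), the plan is to verify $K_{i,4}\cap K_{j,4}=\QQ(\zeta_4)$, $K_{i,9}\cap K_{j,9}=\QQ(\zeta_9)$, and $K_{i,4}\cap K_{j,9}=\QQ$ along the lines of Lemmas \ref{lemma-4-division}, \ref{lemma-9-division}, and \ref{lemma-mixed-division}. Each proof isolates a distinguished subfield such as $\QQ(\sqrt{\Delta_i})$ or $\QQ(\sqrt[3]{\Delta_i},\zeta_3)$ inside $K_{i,n}$ and compares it against the subfield lattice of $K_{j,n}$ recorded in \cite{Adelmann}. The hypothesis $v_{\ell_i}(\Delta_i)\equiv 1\pmod 2$ makes $\ell_i$ ramify in $\QQ(\sqrt{\Delta_i})$, while $v_{\ell_i}(\Delta_j)=0$ keeps it unramified in $\QQ(\sqrt{\pm\Delta_j})$; the condition $\ell_i>3$ then distinguishes $\QQ(\sqrt{\Delta_i})$ from the cyclotomic fields $\QQ(\zeta_4)$ and $\QQ(\zeta_3)$. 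Parallel ramification comparisons at $\ell_i$ handle the cubic and mixed cases.

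The step requiring the most care is the ramification bound in (a): one must ensure that the Tate-curve computation really delivers $e_{\ell_i}>2$ under only the hypothesis that $v_{\ell_i}(\Delta_i)$ is odd. Since the uniformization gives $e_{\ell_i}=p/\gcd(p,v_{\ell_i}(\Delta_i))$ when $p\neq\ell_i$, any vanishing issues force a closer bookkeeping of when $p$ could divide $v_{\ell_i}(\Delta_i)$. Once that bound is secured uniformly in $p\geq 5$, everything else is a mechanical application of Goursat's lemma, the simplicity of $\PSL_2(\ZZ/p\ZZ)$, and the known subfield diagrams of the small division fields.
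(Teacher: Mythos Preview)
Your plan mirrors exactly what the paper does: it offers no separate proof of Theorem \ref{thm-general} but simply asserts that the arguments of Sections \ref{section-p5andup} and \ref{Section-p23} carry over under the weaker hypotheses. You have in fact gone further than the paper by isolating the one place where the translation is not automatic, namely the ramification bound $e_{\ell_i}>2$ needed in Proposition \ref{prop-p-atleast-5} to rule out $N=\{I\}$ and $N=\{\pm I\}$.

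However, you flag this step without closing it, and under the stated hypotheses it cannot be closed by the same method. Your own formula $e_{\ell_i}=p/\gcd\bigl(p,v_{\ell_i}(\Delta_i)\bigr)$ (for $p\neq\ell_i$) shows that if $p\mid v_{\ell_i}(\Delta_i)$ then $e_{\ell_i}=1$, so $\ell_i$ is \emph{unramified} in $\QQ(E_i[p])$ and the ramification comparison collapses. Oddness of $v_{\ell_i}(\Delta_i)$ does not prevent an odd prime $p\geq 5$ from dividing it, and nothing rules out $p\mid v_{\ell_j}(\Delta_j)$ simultaneously, so switching to the other prime need not help either. A parallel issue hides in your treatment of Lemma \ref{lemma-9-division}: distinguishing $\QQ(\zeta_3,\sqrt[3]{\Delta_i})$ from $\QQ(\zeta_3,\sqrt[3]{\Delta_j})$ by ramification at $\ell_i$ requires $3\nmid v_{\ell_i}(\Delta_i)$, which again is not implied by oddness (e.g.\ $v_{\ell_i}(\Delta_i)=3$). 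In the paper's explicit family one has $v_{\ell_i}(\Delta_i)\in\{1,2\}$, so neither problem arises; but the phrase ``once that bound is secured uniformly in $p\geq 5$'' is precisely the step that remains open in the generality of Theorem \ref{thm-general}, and the same lacuna is present in the paper's bare assertion that its earlier arguments prove it.
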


\section*{Acknowledgments} The authors would like to thank \'Alvaro Lozano-Robledo and Sam Taylor for their helpful discussions throughout the writing process. %The authors would also like to thank the referee and editors for their useful comments and a quick editorial process.

\vspace{5ex}

\end{document}